\documentclass[11pt]{amsart}

\usepackage[T1]{fontenc}
\usepackage{lmodern}
\usepackage[protrusion=true,expansion=false]{microtype}
\usepackage{amsmath,amssymb,amsthm,mathtools}
\usepackage[margin=2.5cm]{geometry}
\usepackage[colorlinks=true,linkcolor=blue,citecolor=blue,urlcolor=blue,
            hypertexnames=false]{hyperref}
\usepackage{setspace}

\newtheorem{mainresult}{Theorem}

\newtheorem{maincorollary}[mainresult]{Corollary}
\newtheorem{theorem}{Theorem}[section]
\newtheorem{proposition}[theorem]{Proposition}
\newtheorem{lemma}[theorem]{Lemma}

\theoremstyle{remark}

\allowdisplaybreaks

\newcommand{\R}{\mathbb R}
\newcommand{\C}{\mathbb C}
\newcommand{\T}{\mathbb T}
\newcommand{\HH}{\mathcal H}
\newcommand{\Sc}{\mathcal S}
\newcommand{\supp}{\operatorname{supp}}

\newcommand{\D}{\mathcal D}
\newcommand{\1}{\mathbf 1}
\newcommand{\loc}{\mathrm{loc}}
\title{Phase retrieval for Schr\"odinger evolutions}

\author{Ben Pineau}
\address{Courant Institute School of Mathematics, Computing, and Data Science, New York University,
New York, NY, 10012}
\email{brp305@nyu.edu}

\author{João P. G. Ramos}
\address{Instituto de Matemática Pura e Aplicada (IMPA) - Estrada Dona Castorina 110, 22460-320, Rio de Janeiro - RJ, Brazil.}
\email{joao.ramos@impa.br}

\author[Mitchell A. Taylor]{Mitchell A. Taylor}
\address{Mathematical Institute, University of Oxford, Andrew Wiles Building, Radcliffe Observatory Quarter, Woodstock Road, Oxford, OX2 6GG, United Kingdom}
\email{mitchtaylor@shaw.ca}

\begin{document}

\begin{abstract}
Consider the one-dimensional Schr\"odinger evolution
\begin{equation}\label{SchV}
i\partial_tw+\partial_x^2w=Vw,\hspace{5mm}(t,x)\in [0,T]\times\mathbb{R}   , 
\end{equation}
where $V$ is a real-valued time-dependent potential belonging to $L^1_tL^\infty_x+L^2_{t,x}$. We prove that an $L^2$ solution to the above equation is determined by its mass density $|w|^2$ modulo a global unimodular phase factor. This answers a question recently posed by Jaming \cite{Jaming2025} for a large class of  potentials, and recovers his earlier result for the free Schr\"odinger evolution. Our proof is simple and robust. It combines the basic local smoothing properties of the above Schr\"odinger evolutions, Strichartz estimates, and a stripwise version of a unique continuation theorem of Ionescu and Kenig to show that the interaction function
\begin{equation*}
F(t,x,y)=u(t,x)v(t,y)-u(t,y)v(t,x)    
\end{equation*}
vanishes identically on $[0,T]\times\mathbb{R}^2$ whenever $u$ and $v$ are $L^2$ solutions to \eqref{SchV} with the same mass density. The method immediately extends to several important nonlinear models such as the cubic NLS equation, which is globally well-posed in $L^2(\mathbb{R})$ with $V=|u|^2$. Interestingly, we also show that the above phase recovery property fails in dimensions two and higher, even for the free equation  with Schwartz initial data.
\end{abstract}

\maketitle

\section{Introduction}

Let $\Omega\subset\R^n$ be a domain and let $T>0$. We say that a function
$w:[0,T]\times\Omega\to\C$ solves a Schr\"odinger equation with real-valued potential
$V:[0,T]\times\Omega\to\R$ if it satisfies
\begin{equation}\label{eq:schr-pot}
i\partial_t w+\Delta w=Vw,
\end{equation}
in a suitable sense, depending on the regularity of the solution and the potential. Equation \eqref{eq:schr-pot} is of pivotal importance in quantum mechanics, where by adjusting the potential $V$ we may use it to describe the time evolution of the spatial distribution of particles. We refer the reader to Teschl~\cite{TeschlBook} for the theoretical background on the physics of this equation.

The main question we are concerned with in this manuscript is heavily motivated by quantum mechanics, and dates back to the work of Pauli~\cite{Pauli}. Indeed, let us consider first the problem of determining when it is possible to recover a (nice enough) function $f$ up to a unimodular constant from measurements of the form $|f|$, $|\widehat f|$. This is the so-called \emph{Pauli problem}~\cite{CorbettHurst,RamosSousa,Vogt}. From a quantum mechanical point of view, this can be thought of as recovering a wave function from its probability densities of position and momentum at a single time. By now, it is well-known that this is not possible. 

On the other hand, Pauli's problem is intimately connected to the problem of phase retrieval for Schr\"odinger equations. Indeed, if $u\in C(\mathbb{R};L^2(\mathbb{R}^n))$ is a solution to the free Schr\"odinger evolution,
\begin{equation}\label{eq:free-schr}
\begin{cases}
i\partial_t u(t,x)+\Delta u(t,x)=0 & \text{in }\R\times\R^n,\\
u(0,x)=f(x) & \text{on }\R^n,
\end{cases}
\end{equation}
then, for $t\ne0$, the solution can be represented as
\[
u(t,x)=\frac{1}{(4\pi i t)^{n/2}}e^{i|x|^2/(4t)}
\widehat{\left(e^{i|\cdot|^2/(4t)}f\right)}
\left(\frac{x}{4\pi t}\right),
\]
where $\widehat h(\xi)=\int_{\R^n}e^{-2\pi i x\cdot\xi}h(x)\,dx$ is the (suitably normalized) Fourier transform on $\mathbb{R}^n$. In other words, if we are able to solve Pauli's problem for the corresponding chirped datum, we would be able to recover the solution $u$ to the free Schr\"odinger equation \eqref{eq:free-schr} above from its values $|u(0,\cdot)|$, $|u(t_0,\cdot)|$ for some $t_0\neq 0$, and vice-versa. Regarding the problem in this particular setting, the counterexamples to Pauli's problem do show that recovery of the solution from two times is not possible in general. This has led the community to several follow-up questions, such as Wright's conjecture~\cite{JamingRathmair}, which posits that there exists a third unitary operator $T$ defined on $L^2$ such that any $f \in L^2(\R)$ may be recovered from $|f|,|Tf|, |\widehat f|$. Another closely related question~\cite{CarmeliHeinosaariSchultzToigo} is that of determining the minimal $N$ such that there are $N$ times $\{t_i\}_{i=1}^N$ for which any solution to \eqref{eq:free-schr} is determined up to multiplication by scalars from $\{|u(t_i,x)|:x\in\R,\ i=1,\dots,N\}$. To the best of our knowledge, both problems remain open.

The Pauli problem can also be thought of as one instance of the broader phase retrieval problem: recovering a function or vector, up to a constant phase, from magnitudes of prescribed measurements. This problem also arises in optics, crystallography and signal processing; we refer to~\cite{GrohsKoppensteinerRathmair,KlibanovSacksTikhonravov} for surveys. Finite-dimensional recovery from frame coefficients in deterministic and probablisitc settings are studied in~\cite{BalanCasazzaEdidin,CandesStrohmerVoroninski}, while infinite-dimensional Hilbert- and Banach-space formulations are treated in~\cite{AlaifariGrohs,CahillCasazzaDaubechies}. Stability on subspaces of function spaces is investigated in~\cite{AbdallaDeDiosRamosTaylor,ChristPineauTaylor,FreemanOikhbergPineauTaylor}, and H\"older stability on nonlinear subsets is studied in~\cite{FreemanTaylor}. For short-time Fourier measurements, local stability and its obstructions are considered in~\cite{AlaifariDaubechiesGrohsYin,AlaifariPineauTaylorWellershoff,BertoliniDeDiosPineauRamosTaylor,GrohsRathmair};  stability for squares in the Fock space is treated in~\cite{BortolottoRamos}.

Back to the particular realm of Schr\"odinger equations, one is led to ask more generally if one may recover the solution $w$ to \eqref{eq:schr-pot} from the data $|w(t,\cdot)|$, where $t$ runs through a set of times. In this direction, Jaming~\cite{Jaming2014} first showed that if one considers solutions to \eqref{eq:free-schr}, then phase retrieval is possible from $\{|u(t,x)|\}_{(t,x)\in\R\times\R}$ by connecting the problem to the fractional Fourier transform. Moreover, he later conjectured~\cite{Jaming2025} that the same should hold for \eqref{eq:schr-pot} under certain conditions on the potential $V$, and suggested that the same result should hold in higher dimensions.

The main purpose of this manuscript is to answer some of these latter questions on phase retrieval for solutions to \eqref{eq:schr-pot}. Our main result is the following generalization of Jaming's result which holds for a large class of real-valued potentials:

\begin{mainresult}
\label{thm:time-potential}
Let $I_0\subset\R$ be a nonempty open interval. Suppose that the measurable
function $V:I_0\times\R\to\R$ admits, on every compact subinterval
$J\subset I_0$, a decomposition of the form
\[
 V=V_\infty+V_2,
 \qquad
 V_\infty\in L^1(J;L^\infty(\R)),
 \qquad
 V_2\in L^2(J\times\R).
\]
Let $u,v\in C(I_0;L^2(\R))$ be mild solutions of \eqref{eq:schr-pot}. If
\[
 |u(t,x)|=|v(t,x)|
 \quad\text{for almost every }(t,x)\in I_0\times\R,
\]
then $v=\zeta u$ on $I_0\times\R$ for some
$\zeta\in\T$.
\end{mainresult}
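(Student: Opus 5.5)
The plan follows the strategy announced in the abstract: show that the interaction function
\[
F(t,x,y)=u(t,x)v(t,y)-u(t,y)v(t,x)
\]
vanishes on $I_0\times\R^2$, and then deduce proportionality.

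\emph{Step 1: the equation for $F$.} Since $V$ is real and $u,v$ solve \eqref{eq:schr-pot}, a direct computation, first for smooth approximants and then passing to the limit in the mild formulation, shows that $F$ solves the two-dimensional equation
\[
i\partial_tF+\Delta_{x,y}F=(V(t,x)+V(t,y))F .
\]
The potential $W(t,x,y)=V(t,x)+V(t,y)$ is \emph{not} integrable in $y$ uniformly in $x$. This is why the unique continuation result has to be stripwise rather than global.

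\emph{Step 2: vanishing of $F$ off a region.} The hypothesis $|u|=|v|$ gives, for a.e.\ $t$, a measurable unimodular $\theta(t,x)$ with $v=\theta u$ wherever $u\neq0$. Then
\[
F(t,x,y)=u(t,x)u(t,y)\bigl(\theta(t,y)-\theta(t,x)\bigr).
\]
This alone does not give vanishing on an open set. I would instead use the quantity $G=u(t,x)\overline{u(t,y)}-v(t,x)\overline{v(t,y)}$, the difference of density matrices. It satisfies the analogous equation with $\Delta_x-\Delta_y$ and $V(t,x)-V(t,y)$, and it vanishes on the diagonal $x=y$.

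The intended route is a Ionescu--Kenig type theorem. Its hypotheses are that a solution lies in a suitable space (Strichartz plus local smoothing) on a strip $[0,T]\times\{(x,y): |x-y|<R\}$ or on a half-space, and that it vanishes in a suitable asymptotic or half-space sense; its conclusion is that it vanishes identically. The key input is $|F|^2$-type identities: $|u(x)|^2|v(y)|^2$ is symmetric under $x\leftrightarrow y$, and $\int|F|^2\,dy$ can be expressed through $|u|^2=|v|^2$. Concretely,
\[
\tfrac12\iint|F|^2\,dx\,dy=\|u\|_2^2\|v\|_2^2-\Bigl|\int u\bar v\Bigr|^2 .
\]
So $F\equiv0$ is equivalent to equality in Cauchy--Schwarz, i.e.\ $v=\zeta(t)u$. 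After that, plugging into the equation forces $\zeta$ to be constant in $t$, because $\zeta'u=0$ for nonzero $u$. The case $u\equiv0$ is trivial.

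\emph{Step 3: the main obstacle.} The hardest step is producing the vanishing region to which unique continuation applies. I would first try to show that $F$ vanishes on the half-space $\{x>y\}$ in a weak/limiting sense, via a Carleman/virial identity in the rotated variable $s=(x-y)/\sqrt2$. The identity is built from the time derivative of the weighted mass $\iint \phi(x-y)|F|^2$, which depends only on $|u|^2$ and $|v|^2$ and the current. Equality of densities forces the currents $\operatorname{Im}(\bar u\partial_xu)$ and $\operatorname{Im}(\bar v\partial_xv)$ to have equal divergence through the continuity equation; since both lie in $L^2_tL^1$ by local smoothing, the currents are equal. Feeding this into the virial identity should show that the stripwise weighted norms of $F$ are monotone and bounded. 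The Ionescu--Kenig theorem, which handles $L^1_tL^\infty+L^2_{t,x}$ potentials on strips via Strichartz and local smoothing, then yields $F\equiv0$. Controlling the $L^2_{t,x}$ part of $W$ on strips using the local smoothing gain of half a derivative is where I expect the technical work to concentrate.
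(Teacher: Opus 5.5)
There is a genuine gap: nothing in your plan produces a region on which a solution of a Schr\"odinger-type equation vanishes, and the route you propose in Step~3 cannot produce one. Since $F$ is odd under $x\leftrightarrow y$, showing that $F$ vanishes on $\{x>y\}$ is equivalent to $F\equiv0$, i.e.\ to the theorem itself, so unique continuation would have nothing left to do. The virial idea also fails on its own terms. With $|u|=|v|=\rho^{1/2}$ and $m=u\overline v$ one has $|F(x,y)|^2=2\rho(x)\rho(y)-2\operatorname{Re}\bigl(m(x)\overline{m(y)}\bigr)$. So neither $\iint\phi(x-y)|F|^2$ nor its time derivative (a pairing with the current $\operatorname{Im}(\overline F\,\nabla F)$ of $F$ itself) is determined by the densities and currents of $u$ and $v$. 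In any case, monotone bounded weighted norms would not give vanishing anywhere. The density-matrix difference is a dead end too: in rotated variables $\Delta_x-\Delta_y=2\partial_r\partial_s$ is not elliptic, so no Ionescu--Kenig estimate applies, and vanishing on the diagonal is only codimension-one information. Finally, for $L^2$ data the currents are not in $L^2_tL^1_x$. Local smoothing gives only $\chi u\in L^2_tH^{1/2}_x$, so $\overline u\,\partial_xu$ is merely a local $H^{-1/2}$--$H^{1/2}$ pairing. The integration constant in ``equal divergence implies equal currents'' must therefore be removed differently, e.g.\ by testing the continuity equation against a bounded primitive $a(x)=\int_{-\infty}^x\phi$.

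The missing idea is a zero extension. With $s=(x-y)/\sqrt2$, oddness gives the Dirichlet trace $F|_{s=0}=0$. Differentiating gives $\partial_sF|_{s=0}=\sqrt2\,(v\,\partial_xu-u\,\partial_xv)$ on the diagonal, so the normal trace vanishes exactly when the Wronskian identity $v\,\partial_xu-u\,\partial_xv=0$ holds. You have the ingredients but not this step. Equal currents together with $2\operatorname{Re}(\overline w\,\partial_xw)=\partial_x|w|^2$ give $\overline u\,\partial_xu=\overline v\,\partial_xv$. Removing the relative phase at $H^{1/2}_{\loc}\cap L^\infty_{\loc}$ regularity then needs a regularization: test against $\phi\,uv/(|u|^2+\varepsilon)$ and let $\varepsilon\to0$. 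Both Cauchy traces vanish, understood weakly through $s\mapsto\langle F(\cdot,\cdot,s),\psi\rangle_{t,r}\in H^2_{\loc}$. The jump formula then shows that $\1_{\{s>0\}}F$ solves the same equation with potential $V(t,x)+V(t,y)$ on all of $I\times\R^2$, with no boundary source, while vanishing on the half-space $\{s<0\}$. That is the vanishing region you were looking for. The stripwise Ionescu--Kenig argument kills this function, and oddness then gives $F=0$. The strip smallness you expected to be the hard part is elementary: Fubini gives $\|\1_{\{a<s<a+h\}}V_2(t,(r\pm s)/\sqrt2)\|_{L^2}^2=\sqrt2\,h\,\|V_2\|_{L^2}^2$, and $V_\infty$ is handled by shortening the time interval. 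The real technical work lies in justifying the traces and the Wronskian identity for merely $L^2$ solutions.
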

Here, a solution $w$ to \eqref{eq:schr-pot} is said to be ``mild'' if it satisfies the Duhamel formula. That is, for $t_0,t\in I_0$, we have
\begin{equation}\label{milddef}
 w(t)=e^{i(t-t_0)\partial_x^2}w(t_0)
 -i\int_{t_0}^t e^{i(t-s)\partial_x^2}(V(s)w(s))\,ds
\end{equation}
in the sense of distributions. On every compact interval $J\subset I_0$, the products of $w$ with each part of the potential satisfy
\[
 V_\infty w\in L^1(J;L^2),
 \qquad
 V_2w\in L^2(J;L^1)\subset L^{4/3}(J;L^1),
\]
by H\"older's inequality and the fact that $w\in C(J;L^2)$. The Duhamel integral above is then interpreted using this decomposition. The motivation for this particular potential decomposition comes from the fact that a $C(J;L^2(\mathbb{R}))$ solution to \eqref{milddef} satisfies the natural Strichartz estimate (see Lemma \ref{lem:endpoint-wronskian} below) $w\in L_t^{\infty}L_x^2\cap L_t^4L_x^{\infty}$; the assumption on the potential then ensures that we can measure the source term $Vw$ in the natural dual norm $L_t^1L_x^2+L_t^{\frac{4}{3}}L_x^1$.

We also remark that, in particular, the potentials in~\cite{Jaming2025} are all assumed to be time-\emph{independent}, and hence Theorem \ref{thm:time-potential} provides a positive answer to Question I.2 in~\cite{Jaming2025} for a wide class of stationary potentials; namely it applies to any potential in $L^2(\R)+L^\infty(\R)$, which includes all $L^p(\R)$,
$2\le p\le\infty$.

In contrast to ~\cite{Jaming2025}, our proof relies essentially only on physical space tools and basic estimates from the theory of dispersive PDE. To briefly sketch the proof of Theorem \ref{thm:time-potential}, we start with the standard endpoint Strichartz and local smoothing estimates~\cite{KeelTao,VegaVisciglia}. On each compact time interval,  we have
\[
 u,v\in L^4_tL^\infty_x,
 \qquad
 \chi u,\chi v\in L^2_tH^{1/2}_x
 \quad(\chi\in C_c^\infty(\R)).
\]
The latter property provides just barely enough regularity to justify the continuity equation
\begin{equation}\label{localmass}
\partial_t|w|^2+2\partial_x\operatorname{Im}(\overline w\,\partial_xw)=0
\end{equation}
in the sense of distributions. For Schwartz solutions $u$ and $v$ with $|u|=|v|$ this will show that the currents (or momentum densities) agree in the sense of distributions, 
\begin{equation*}
\operatorname{Im}(\overline{u}\partial_xu)=\operatorname{Im}(\overline{v}\partial_xv)    
\end{equation*}
and thus, since $2\operatorname{Re}(\overline{u}\partial_xu)=\partial_x|u|^2$, we would have
\begin{equation*}
\overline{u}\partial_xu-\overline{v}\partial_xv=0.    
\end{equation*}
Using $|u|=|v|$, this formally gives
\begin{equation*}
v\,\partial_xu-u\,\partial_xv=0
 \quad\text{in }\D'(\R_x)    
\end{equation*}
for almost every time. One of the subtle difficulties in our argument will be in showing that this can be made sense of for solutions with merely $L^2$ initial data. Once this is achieved, the next step is to introduce the exterior product, or ``interaction function"
\[
 F(t,x,y)=u(t,x)v(t,y)-v(t,x)u(t,y).
\]
Our goal is to show that this vanishes identically, which would prove, in particular, that the two solutions $u$ and $v$ are linearly dependent. In order to do so, we make the observation that $F$ also solves a two-dimensional Schr\"odinger equation with potential $V(t,x)+V(t,y)$. In the coordinates $r=(x+y)/\sqrt2$ and $s=(x-y)/\sqrt2$, the function $F$ is odd in $s$. Its Dirichlet trace (interpreted in a certain weak sense) at $s=0$ vanishes by oddness, and its normal trace vanishes by the Wronskian identity above. These vanishing Dirichlet and normal traces will enable us to show that the zero extension of $F$ from $s>0$ solves the same two-dimensional Schr\"odinger equation without a singular boundary source term.

Finally, a stripwise version of the Ionescu--Kenig Carleman estimate~\cite{IonescuKenig} (which applies to our potentials) propagates this zero half-space through the whole $(r,s)$-plane to show that $F$ vanishes identically. Thus, $u$ and $v$ are linearly dependent at one time, and equality of their moduli makes the proportionality constant unimodular, with uniqueness propagating the same phase throughout $I_0$.

If the potential and both solutions extend globally, uniqueness propagates the conclusion to every time.  The theorem also applies to nonlinear equations whenever equality of the moduli makes their nonlinear potentials
identical. This gives immediately the following consequence for the cubic nonlinear Schr\"odinger equation (or cubic NLS for short).

\begin{maincorollary}
\label{cor:cubic-NLS}
Let $\sigma\in\R$, let $I_0\subset\R$ be a nonempty open interval, and let
$u,v\in C(\R;L^2(\R))$ be global mild solutions of
\[
i\partial_t w+\partial_x^2w=\sigma|w|^2w.
\]
If $|u|=|v|$ almost everywhere on $I_0\times\R$, then
$v=\zeta u$ for every time and some $\zeta\in\T$.
\end{maincorollary}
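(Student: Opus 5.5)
The plan is to reduce the corollary to Theorem~\ref{thm:time-potential} by freezing the nonlinearity into a linear potential, and then to pass from $I_0$ to all times by uniqueness.

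\textbf{Step 1: freeze the potential.} Set $V(t,x):=\sigma|u(t,x)|^2$, which is real-valued and measurable. Since $|u|=|v|$ almost everywhere on $I_0\times\R$, both $u$ and $v$ are mild solutions on $I_0$ of the linear equation
\[
i\partial_t w+\partial_x^2 w=Vw .
\]
That is, $u$ satisfies \eqref{milddef} with $\sigma|u|^2u=Vu$, and $v$ does as well, because $\sigma|v|^2v=\sigma|u|^2v=Vv$ almost everywhere. These are the same source terms, so the Duhamel integrals coincide as distributions.

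\textbf{Step 2: check the hypothesis on $V$.} I will show that $V$ has the required decomposition on each compact $J\subset I_0$. The natural choice is $V=V_\infty$ with $V_2=0$. I will apply the Strichartz estimate from Lemma~\ref{lem:endpoint-wronskian}, used in the paper to get $u\in L^4_tL^\infty_x$, to the $C(J;L^2)$ solution $u$. Doing so requires knowing the source term already lies in the dual space, so I plan to argue via local well-posedness instead: the global $L^2$ mild solution of cubic NLS coincides with the Tsutsumi solution, which lies in $L^4_{t}L^\infty_x$ locally in time. If this is unavailable, I would bootstrap on short subintervals.

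Given $u\in L^4(J;L^\infty)$, we get $|u|^2\in L^2(J;L^\infty)\subset L^1(J;L^\infty)$, since $J$ is bounded. An alternative choice is $V_2=\sigma|u|^2\in L^2(J\times\R)$. This requires $u\in L^4(J\times\R)$, which follows by interpolating $L^\infty_tL^2_x$ with $L^4_tL^\infty_x$ to get $L^8_tL^4_x$, and hence $L^4_{t,x}$ on $J$. Either route works.

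\textbf{Step 3: apply the theorem.} Theorem~\ref{thm:time-potential} then gives $\zeta\in\T$ with $v=\zeta u$ on $I_0\times\R$.

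\textbf{Step 4: extend to all times.} Fix $t_0\in I_0$. Both $v$ and $\zeta u$ are global mild solutions of cubic NLS, since phase invariance gives $|\zeta u|^2\zeta u=\zeta|u|^2u$. They share the datum $v(t_0)=\zeta u(t_0)$. Uniqueness of $L^2$ mild solutions for cubic NLS in the class $C_tL^2_x\cap L^4_{t,\loc}L^\infty_x$ then gives $v=\zeta u$ for all $t\in\R$.

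\textbf{Main obstacle.} The delicate point is uniqueness in the bare class $C(\R;L^2)$ of mild solutions. I expect the paper treats a global mild solution as the Strichartz-class solution, in which case uniqueness is standard. Otherwise one has to invoke unconditional uniqueness of one-dimensional cubic NLS in $C_tL^2$, or define mild solutions to include the Strichartz component.
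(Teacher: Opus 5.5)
Your proposal is correct and follows the paper's argument. You freeze $V=\sigma|u|^2=\sigma|v|^2$, check the potential hypothesis from Strichartz regularity, apply Theorem~\ref{thm:time-potential} on $I_0$, and extend to all times by uniqueness of cubic NLS; for the potential, the paper uses your alternative $V=V_2\in L^2(J\times\R)$, via $u\in L^8_tL^4_x\subset L^4_{t,x}$ on bounded intervals. The obstacle you flag is resolved as you anticipate: the paper takes mild solutions in the class $C(\R;L^2(\R))\cap L^8_{\loc}(\R;L^4(\R))$, where uniqueness follows from the standard contraction estimate, so none of your fallback routes (the Tsutsumi identification, bootstrapping, or unconditional uniqueness in bare $C_tL^2$) is needed.
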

Here, cubic NLS mild solutions are understood in the standard class
$C(\R;L^2(\R))\cap L^8_{\loc}(\R;L^4(\R))$.

As a final result, we prove that the prediction made by Jaming in~\cite{Jaming2025} regarding the extendability of the results for \eqref{eq:free-schr} to higher dimensions is \emph{false}.
\begin{mainresult}
\label{thm:higher-dimensional-failure}
For every $d\ge2$ there are nonproportional functions
$f,g\in\Sc(\R^d)$ such that
\[
 |e^{it\Delta}f(x)|=|e^{it\Delta}g(x)|
 \quad\text{for every }(t,x)\in\R\times\R^d.
\]
\end{mainresult}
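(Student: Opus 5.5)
The plan is to exploit the rotational symmetry of the Laplacian in the $(x_1,x_2)$-plane, using the two angular modes $e^{\pm ik\theta}$. The point is that for a state of pure angular momentum $k$, the modulus of the evolved solution is radially symmetric in the $(x_1,x_2)$-plane. Reversing the angular momentum sign produces a genuinely different state whose evolution has the same modulus. There are no analytic estimates to prove; the argument reduces to checking symmetries of $e^{it\Delta}$.

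Fix $d\ge2$ and an integer $k\ge1$, and set
\[
f(x)=(x_1+ix_2)^k e^{-|x|^2},\qquad g(x)=(x_1-ix_2)^k e^{-|x|^2}=f(Px),
\]
where $P(x_1,x_2,x_3,\dots,x_d)=(x_1,-x_2,x_3,\dots,x_d)$. Both functions lie in $\Sc(\R^d)$. They are not proportional: in polar coordinates in the $(x_1,x_2)$-plane they equal $r^ke^{ik\theta}e^{-|x|^2}$ and $r^ke^{-ik\theta}e^{-|x|^2}$ respectively, and $e^{2ik\theta}$ is not constant.

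The argument has three steps.

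\emph{Step 1.} Since $\Delta$ commutes with every orthogonal map $O$, so does $e^{it\Delta}$ (for example via the Fourier transform, since $\widehat{h\circ O}=\widehat h\circ O$). Hence $e^{it\Delta}g(x)=u(t,Px)$, where $u=e^{it\Delta}f$.

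\emph{Step 2.} Let $R_\alpha$ denote the rotation by angle $\alpha$ in the $(x_1,x_2)$-plane, fixing the remaining coordinates. Then $f(R_\alpha x)=e^{ik\alpha}f(x)$. By the same commutation and linearity,
\[
u(t,R_\alpha x)=e^{ik\alpha}u(t,x).
\]
Therefore $|u(t,\cdot)|$ is invariant under every $R_\alpha$.

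\emph{Step 3.} For each $x$, the point $Px$ has the same $x_3,\dots,x_d$ coordinates and the same value of $x_1^2+x_2^2$ as $x$. So $Px=R_\alpha x$ for a suitable $\alpha$ depending on $x$. Combining with Step 2,
\[
|e^{it\Delta}g(x)|=|u(t,Px)|=|u(t,R_\alpha x)|=|u(t,x)|
\]
for all $(t,x)\in\R\times\R^d$. Pointwise, rather than almost-everywhere, equality holds because $u$ is smooth, since the data are Schwartz.

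I do not expect a real obstacle; the only care needed is in Step 1. Rotation and reflection invariance must be stated for the propagator itself, so that the phase identity holds exactly and the moduli agree for every $(t,x)$. Note that this construction is impossible in $d=1$, where there is no rotation group acting on $\R$, which is consistent with Theorem~\ref{thm:time-potential}.
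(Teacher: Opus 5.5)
Your proof is correct, but it reaches the result by a genuinely different route from the paper's.

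The two examples are essentially the same. The paper's pair $e^{-|x|^2/2}(x_2\pm ix_1)$ equals your $k=1$ pair $(x_1\mp ix_2)e^{-|x|^2/2}$ up to unimodular constants, relabeling of $f$ and $g$, and the Gaussian width.

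The paper verifies the modulus identity by explicit computation. It factors $e^{it\Delta}$ over tensor products and writes the evolved Gaussian $A_t$ in closed form. Using $e^{it\partial_x^2}\psi=xA_t/(1+2it)$, it obtains two formulas that differ only in the factor $x_2\pm ix_1$.

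You never compute the evolution. Equivariance of $e^{it\Delta}$ under orthogonal maps gives $u(t,R_\alpha x)=e^{ik\alpha}u(t,x)$ and $e^{it\Delta}g=u(t,\cdot)\circ P$. The pointwise observation that $Px$ lies on the $R_\alpha$-orbit of $x$ then finishes the argument.

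The explicit route gives closed-form solutions. It also feeds directly into the paper's follow-up interpretation through the opposite circulations $\pm e^{-|x|^2}(x_2,-x_1)$, which is the same angular-momentum heuristic you use.

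Your symmetry route is shorter and more general.
\begin{itemize}
\item It works for every $k\ge1$ and every datum $(x_1+ix_2)^k\rho(x_1^2+x_2^2,x_3,\dots,x_d)$ with $\rho$ Schwartz.
\item It only uses that the propagator commutes with the $O(2)$ action in the $(x_1,x_2)$-plane. So it carries over unchanged to $i\partial_tw+\Delta w=Vw$ with potentials invariant under that action (e.g.\ radial ones), whenever the propagator is well defined.
\end{itemize}
That extension yields higher-dimensional counterexamples even in the presence of a potential, which the tensor-product computation cannot reach.
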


The counterexample is based on the Gaussian $\phi(x)=e^{-x^2/2}$ and its negative derivative $\psi(x)=xe^{-x^2/2}$. In dimension two, set
\[
 f=\phi\otimes\psi+i\psi\otimes\phi,
 \qquad
 g=\phi\otimes\psi-i\psi\otimes\phi.
\]
These are nonproportional Schwartz functions. If $A_t=e^{it\partial_x^2}\phi$, then $e^{it\partial_x^2}\psi=xA_t/(1+2it)$, and the tensor-product factorization of the free propagator gives
\begin{align*}
 e^{it\Delta}f(x_1,x_2)
 &=\frac{A_t(x_1)A_t(x_2)}{1+2it}(x_2+ix_1),\\
 e^{it\Delta}g(x_1,x_2)
 &=\frac{A_t(x_1)A_t(x_2)}{1+2it}(x_2-ix_1).
\end{align*}
The above two expressions have the same modulus for every point and every time. To extend the result to $d>2$, simply tensor both functions with a Gaussian in the remaining variables.
\\

It is instructive to interpret this counterexample in relation to the general continuity equation
\begin{equation*}
\partial_t|w|^2+2\nabla\cdot \operatorname{Im}(\overline{w}\nabla w)=0  
\end{equation*}
which is the higher dimensional analogue of \eqref{localmass}. In one dimension, one sees formally that the evolution of the mass density $|w|^2$ uniquely determines the momentum density (for regular enough sufficiently decaying data), but it only determines the momentum density modulo divergence free vector fields in higher dimensions. In the two dimensional example above, the initial momentum densities are
\[
p_f(x)=-p_g(x)=e^{-|x|^2}(x_2,-x_1),
\]
which are purely rotational and divergence-free. Thus, the two data carry
opposite circulations, while their evolutions have identical mass densities.
\\

This article is organized as follows. In Section 2, we prove Theorem \ref{thm:time-potential}. In Section 3, we deduce Corollary \ref{cor:cubic-NLS} as a consequence of the preceding result. Section 4 is then dedicated to the counterexample construction of Theorem \ref{thm:higher-dimensional-failure}. Finally, we gather, in Section 5 below, some other results obtained in parallel to the ones here. These are results which are mostly superseded/covered by the ones in the present paper, but since their techniques (we believe) are interesting in their own right, we decided to keep them in a separate repository for independent consultation by the interested reader.

\subsection*{LLM Usage.} Large language models have been of great importance in the production of this manuscript. Below is a brief narration of how they contributed to the current work. 

We started from the first idea of considering the continuity equation $ \partial_t|w|^2+2\partial_x\operatorname{Im}(\overline w\,\partial_xw)=0$, together with integration by parts, in order to show that, for two sufficiently regular solutions $u,v$ with $|u|=|v|$, a smooth relative phase in the representation $v(t,x)=e^{i\phi(t,x)}u(t,x)$ must be locally constant in space on the nonvanishing set of $u$. The equations then imply that the relative phase is constant on each connected component of this nonvanishing set. Our initial argument then ran through classical works (see e.g. \cite{Masuda}) to obtain uniqueness of the phase overall.

This argument, however, only works in high enough regularity. In order to remove such regularity, we needed to get rid of the assumptions on the phase. Entirely human ideas allowed us to work with data in $L^2$ for a restricted class of potentials, and data in $H^{\epsilon}$ for a large class of potentials, but not both simultaneously.  At that point, we started exploring with the aid of GPT 5.4/Claude Opus 4.6. It gave us several different directions, many of which are described in the end of this manuscript, such as ones based on scattering theory. 

By exploiting and exhausting several paths towards the general $L^2$ case with GPT 5.4 and 5.5, as well as Claude Opus 4.7 and 4.8, we had the idea of working at the level of currents. By doing so, we were naturally led to the idea of lifting our equation one dimension higher. The details on how to finish from there, such as applying specific unique continuation lemmas, were then provided by interacting with several different LLMs. Finally, GPT 5.5 in Codex was also used in order to obtain a Lean verification of Corollary \ref{cor:cubic-NLS}.

Most of the other attempts and results through different methods have been written down and are kept \href{https://github.com/joaopgramos95/Schrodinger-PR/tree/main}{here}, together with the aforementioned Lean verification. Apart from that note, the results and proofs here have all been written and verified by humans. We take full responsibility for the contents of both manuscripts. 

\section*{Acknowledgements}

The authors are thankful to Philippe Jaming, Jaume de Dios and Lukas Liehr for several enlightening conversations about the topic during the elaboration of the manuscript. 

J. P. G. R.  was supported by the FCT through project SHADE
(2023.17881.ICDT), DOI \allowbreak \texttt{10.54499/}
\texttt{2023.17881.ICDT}. He was also supported  by FAPERJ through  JCNE grant
no.~SEI-260003/ \allowbreak 020475/2025, and by Instituto Serrapilheira through
grant Serra-R-2510-59765.

\section{Proof of Theorem \ref{thm:time-potential}}

We start with the following local form of the half-space unique continuation argument of
Ionescu--Kenig~\cite{IonescuKenig}. Throughout the proof of the following proposition, we borrow for ease of comparison, the notation used in this paper.

\begin{proposition}
\label{prop:strip-propagation}
Let $J$ be a compact interval of positive length, $z=(z_1,z_2)\in\R^2$.
Put $\HH=i\partial_t+\Delta_z$ and set
\begin{align*}
 X&=L^1(J;L^2(\R^2))+L^{4/3}(J\times\R^2),\\
 X'&=L^\infty(J;L^2(\R^2))\cap L^4(J\times\R^2),\\
 Y&=L^1(J;L^\infty(\R^2))+L^2(J\times\R^2).
\end{align*}
Then there is an absolute $\varepsilon_*>0$ with the following
property.  Suppose
$Z\in C(J;L^2)\cap X'$ is such that
\[
i\partial_tZ+\Delta_zZ=WZ\in X,
\]
and $Z=0$ on $J\times\{z_1>b\}$.  Here, $W$ is a potential such that, for some $h>0,$
\[
 \sup_{a\le b-h}
 \|\1_{\{a<z_1<a+h\}}W\|_Y\le\varepsilon_*.
 \tag{1.3}\label{eq:strip-smallness}
\]
Under these hypotheses, we have that $Z=0$ on $J\times\R^2$.
\end{proposition}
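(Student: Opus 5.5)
The plan is to reproduce the Ionescu--Kenig Carleman argument, localized to strips of width $h$ in $z_1$. The basic tool is a Carleman estimate with linear exponential weight in the $z_1$ direction, uniform in the weight parameter $\lambda$: for $\lambda\in\R$ and $\Phi$ with $e^{\lambda z_1}\Phi\in X'$, $\HH(e^{\lambda z_1}\Phi)\in X$, vanishing suitably at the ends of $J$,
\[
 \|e^{\lambda z_1}\Phi\|_{X'}\le C\bigl(\|e^{\lambda z_1}\HH\Phi\|_{X}+\|e^{\lambda z_1}\Phi(\min J)\|_{L^2}+\|e^{\lambda z_1}\Phi(\max J)\|_{L^2}\bigr),
\]
with $C$ absolute. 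I would prove this by conjugation: $e^{\lambda z_1}\HH e^{-\lambda z_1}=i\partial_t+(\partial_{z_1}-\lambda)^2+\partial_{z_2}^2$. After a Galilean change of variables $z_1\mapsto z_1+2i\lambda t$ formally, or more honestly by Fourier analysis in $z$, the symbol is $-\tau-(\xi_1+i\lambda)^2-\xi_2^2$. Ionescu--Kenig show that the inverse of this operator is bounded $X\to X'$ uniformly in $\lambda$, since it is a perturbation of the Strichartz/Keel--Tao endpoint pair $(\infty,2),(4,4)$ in two dimensions. I would simply quote this uniform estimate from \cite{IonescuKenig}, applied on $J$ with the Duhamel formulation and the $C(J;L^2)$ endpoint terms. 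As in the paper's notation, $L^\infty L^2\cap L^4_{t,z}$ is the admissible $X'$ in $2$D.

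Next comes the propagation step. Let $b_*=\inf\{a: Z=0 \text{ on } J\times\{z_1>a\}\}$, and suppose for contradiction $b_*>-\infty$. After translation, assume $Z$ vanishes for $z_1>b_*$. Cut off with $\chi(z_1)$ equal to $1$ on $z_1>b_*-h/2$ and $0$ for $z_1<b_*-h$, and set $\Phi=\chi Z$. Then $\HH\Phi=W\Phi+2\chi'\partial_{z_1}Z+\chi''Z$, where the commutator terms are supported in $\{b_*-h<z_1<b_*-h/2\}$. Apply the Carleman estimate with weight $e^{-\lambda z_1}$, $\lambda\to+\infty$, which favours the region near $b_*$. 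The potential term is handled by Hölder with the duality $Y\cdot X'\subset X$: since $\Phi$ is supported in $\{z_1>b_*-h\}$ and the strip $\{b_*-h<z_1<b_*\}$ is one of those in \eqref{eq:strip-smallness}, we get
\[
\|e^{-\lambda z_1}W\Phi\|_X\le\varepsilon_*\|e^{-\lambda z_1}\Phi\|_{X'}.
\]
Choosing $C\varepsilon_*<1/2$, this is absorbed.

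The commutator terms carry weight at most $e^{-\lambda(b_*-h/2)}$. The main obstacle is that the derivative $\chi'\partial_{z_1}Z$ is not controlled in $X$ for merely $L^2$ solutions. I would avoid it by not cutting off $Z$ at all. Instead, I would apply the Carleman estimate to $Z$ itself with a weight that is bounded on the support: use the weight $e^{-\lambda z_1}$ truncated, or run the estimate for $e^{-\lambda\max(z_1,b_*-h)}$-type weights. This requires redoing the Ionescu--Kenig estimate with convex piecewise-linear weights. A more likely workable route is to take a time-independent mollification in $z$ first, or to use local smoothing to place $\partial_{z_1}Z$ in $L^2_tH^{-1/2}$ locally and absorb that into a dual local-smoothing space included in the Ionescu--Kenig theory. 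Granting the weighted bound, the endpoint terms in time are controlled since $Z(t)\in L^2$ vanishes for $z_1>b_*$. We then obtain
\[
e^{-\lambda(b_*-h/4)}\|\Phi\|_{X'(\{b_*-h/4<z_1<b_*\})}\lesssim e^{-\lambda(b_*-h/2)}\,(\text{bounded}).
\]
Letting $\lambda\to\infty$ forces $Z=0$ on $\{z_1>b_*-h/4\}$, which contradicts the minimality of $b_*$. Hence $Z\equiv0$.
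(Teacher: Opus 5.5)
There is a genuine gap at the time endpoints. Your Carleman inequality, like the Ionescu--Kenig Lemma~9.1 the paper quotes, carries the boundary terms $\|e^{\lambda z_1}\Phi(t_\pm)\|_{L^2}$ with the \emph{same} exponential weight as the left-hand side. That weight must increase toward $b_*$, i.e.\ $e^{+\lambda z_1}$; with $e^{-\lambda z_1}$ as you wrote it every comparison reverses, but that is only a sign slip.

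At $t_\pm=\min J,\max J$ you know only that $Z(t_\pm)=0$ for $z_1>b_*$, not on $\{b_*-h<z_1<b_*\}$. So for every $\epsilon>0$ the boundary term is at least $e^{\lambda(b_*-\epsilon)}\|\1_{\{b_*-\epsilon<z_1<b_*\}}Z(t_\pm)\|_{L^2}$. This beats the gain $e^{\lambda(b_*-h/4)}$ on the left unless $Z(t_\pm)$ already vanishes near $b_*$, which is what you are trying to prove. The sentence ``the endpoint terms in time are controlled since $Z(t)$ vanishes for $z_1>b_*$'' is exactly where the argument breaks, and $e^{\lambda(b_*-h/4)}\|\Phi\|\lesssim e^{\lambda(b_*-h/2)}$ does not follow. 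A time cutoff $\theta(t)$ does not rescue this either, since $\theta'(t)\chi Z$ is supported all the way up to $z_1=b_*$.

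The missing idea is the paper's moving spatial coordinate. On $J=[0,1]$ one translates by $w_\delta(t)=b_j-h\eta_\delta(t)$, which equals $b_j$ near $t=0,1$ and $b_j-h$ on $[2\delta,1-2\delta]$. One also multiplies by the Galilean phase $e^{-iw_\delta'(t)y_1/2}$, which cancels the first-order term. The resulting $U_\delta$ vanishes for $y_1>h$ at all times and for $y_1>0$ at $t=0,1$. Hence the boundary terms only see the region where $e^{\phi_\beta(y_1)}\le1$, and they are bounded independently of $\beta$.

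The price is a new potential $M_\delta=\tfrac12w_\delta''y_1-\tfrac14(w_\delta')^2$, with $|M_\delta|\le K_\delta(1+|y_1|)$ and $K_\delta$ of order $h\delta^{-2}$. This cannot be treated as part of the small potential. On the strip $0<y_1<1$ it is absorbed by the extra term $\beta^{c_0}\|\1_{\{0<y_1<1\}}e^{\phi_\beta(y_1)}U\|_{L^1_tL^2}$ in Ionescu--Kenig's lemma, a gain your uniform estimate does not contain. For $y_1\le0$, its linear growth is compensated by $(1+|y_1|)e^{\phi_\beta(y_1)}\le1$; this is why $\phi_\beta$ keeps slope $1$ to the left rather than being truncated to a constant.

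The step $h$ is therefore independent of $\delta$. One gets $Z(t)=0$ on $\{z_1>b_j-h\}$ for $t\in[2\delta,1-2\delta]$, and then all of $J$ by letting $\delta\to0$ and using $L^2$-continuity at the endpoints.

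On the commutator issue you flagged: you were right that a convex piecewise-linear weight removes the need for a spatial cutoff. This is already built into Lemma~9.1 via $\phi_\beta$, with hypotheses $U=0$ for $y_1>1$ and $(1+|y_1|)^{-1}\HH U\in X$, so nothing has to be re-proved. Your mollification alternative would not work as stated, since mollifying in $z$ does not commute with multiplication by $W$.
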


\begin{proof}
We will prove that the zero set can be enlarged by one strip, and then
repeat that argument.  The analytic input is the Carleman inequality in
\cite[Lemma~9.1]{IonescuKenig}.  We state the form we use and verify its
hypotheses below.

\medskip
\noindent\emph{Step 1: the multiplication estimate.}
We begin by recording a simple product estimate for the space $X$. We first recall that the  norm on $Y$ is
\[
 \|A\|_Y=\inf_{A=A_\infty+A_2}
 \bigl(\|A_\infty\|_{L^1_tL^\infty_z}
       +\|A_2\|_{L^2_{t,z}}\bigr).
\]
The norm on $X$ is defined in the same way using its two summands, while
\[
 \|B\|_{X'}=\max\bigl\{
 \|B\|_{L^\infty_tL^2_z},\ \|B\|_{L^4_{t,z}}\bigr\}.
\]
Now, fix $A\in Y$ and $B\in X'$.  For every decomposition
$A=A_\infty+A_2$, H\"older's inequality gives
\begin{equation*}
\begin{split}
 \|A_\infty B\|_{L^1_tL^2_z}&=\int_J\|A_\infty(t)B(t)\|_2\,dt\leq\int_J\|A_\infty(t)\|_\infty\|B(t)\|_2\,dt\leq\|A_\infty\|_{L^1_tL^\infty_z}
       \|B\|_{L^\infty_tL^2_z}.
\end{split}
\end{equation*}
We also have
\[
 \|A_2B\|_{L^{4/3}_{t,z}}
 \le\|A_2\|_{L^2_{t,z}}\|B\|_{L^4_{t,z}}.
\]
These
bounds imply
\[
 \|AB\|_X\le
 \bigl(\|A_\infty\|_{L^1_tL^\infty_z}
       +\|A_2\|_{L^2_{t,z}}\bigr)\|B\|_{X'}.
\]
Taking the infimum over decompositions of $A$ proves
\[
 \|AB\|_X\le\|A\|_Y\|B\|_{X'}.
 \tag{1.4}\label{eq:XY-multiplier}
\]
\noindent\emph{Step 2: normalization of the interval and the strip width.} By a translation and rescaling, we can assume without loss of generality that the time interval $J$ is equal to $[0,1]$. Indeed, suppose we originally have $J=[t_-,t_+]$ where $L\coloneq t_+-t_->0$. Set
\[
 \widetilde Z(\tau,\xi)=L^{1/2}Z(t_-+L\tau,L^{1/2}\xi),
 \qquad
 \widetilde W(\tau,\xi)=L W(t_-+L\tau,L^{1/2}\xi).
\]
Then
\[
 (i\partial_\tau+\Delta_\xi)\widetilde Z
 =\widetilde W\widetilde Z,
 \qquad 0<\tau<1.
\]
For any decomposition $A=A_\infty+A_2$ of a potential or restricted
potential, write
$\widetilde A_k(\tau,\xi)=L A_k(t_-+L\tau,L^{1/2}\xi)$ for
$k\in\{\infty,2\}$.  The change of variables gives
\begin{align*}
 \|\widetilde Z\|_{L^\infty_\tau L^2_\xi}
 &=\|Z\|_{L^\infty_tL^2_z},
 &\|\widetilde Z\|_{L^4_{\tau,\xi}}
 &=\|Z\|_{L^4_{t,z}},\\
 \|\widetilde A_\infty\|_{L^1_\tau L^\infty_\xi}
 &=\|A_\infty\|_{L^1_tL^\infty_z},
 &\|\widetilde A_2\|_{L^2_{\tau,\xi}}
 &=\|A_2\|_{L^2_{t,z}}.
\end{align*}
Therefore, all of the hypotheses are preserved, with $b$ and $h$ replaced by $b/L^{1/2}$ and $h/L^{1/2}$. It thus suffices to work on $J=[0,1]$. We also suppose without loss of generality that $h \in (0,1].$
\\
\noindent\emph{Step 3: the Carleman inequality.}
Put $\HH_y=i\partial_t+\Delta_y$.  For $\beta\ge2$, define
\[
 \phi_\beta(r)=
 \begin{cases}
  \beta r,&r\ge-1,\\
  r-\beta+1,&r<-1.
 \end{cases}
\]
In particular, $\phi_\beta(r)=\beta r$ on $[0,1]$.  On the other side of
zero we have the bounds
\[
 e^{\phi_\beta(r)}\le1,
 \qquad
 (1+|r|)e^{\phi_\beta(r)}\le1
 \quad(r\le0,\ \beta\ge2).
 \label{eq:IK-weight-left}\tag{1.4a}
\]
We recall the following Carleman estimate proved in \cite{IonescuKenig}.
\begin{lemma}[Ionescu--Kenig; see \cite{IonescuKenig}]
There are absolute constants $C_0\ge1$ and $c_0>0$ such that, for every
$\beta\ge2$ and every $U$ satisfying
\[
 U\in C([0,1];L^2),\qquad
 U=0\text{ for }y_1>1,\qquad
 (1+|y_1|)^{-1}\HH_yU\in X,
\]
one has
\begin{align*}
 &\|\1_{\{0<y_1<1\}}e^{\phi_\beta(y_1)}U\|_{X'}
 +\beta^{c_0}
  \|\1_{\{0<y_1<1\}}e^{\phi_\beta(y_1)}U\|_{L^1_tL^2_y}\\
 &\le C_0\Bigl(
 \|e^{\phi_\beta(y_1)}\HH_yU\|_X
 +\|e^{\phi_\beta(y_1)}U(0)\|_2
 +\|e^{\phi_\beta(y_1)}U(1)\|_2\Bigr).
 \tag{1.4b}\label{eq:IK-bounded-weight}
\end{align*}
\end{lemma}
This is the case $N=1$ of \cite[Lemma~9.1]{IonescuKenig}, with its
exponents chosen as $(p_0,q_0)=(4/3,4/3)$ in dimension two.
The first term on the left will absorb the contribution of $W$
on the strip where its norm is small. The second term will control
the additional potential produced by the change of variables below,
restricted to the strip where it is bounded. The factor
$\beta^{c_0}$ allows this contribution to be absorbed by taking
$\beta$ sufficiently large. Fix from now on the smallness constant
\[
 \varepsilon_*=(4C_0)^{-1}.
\]
\\
\noindent\emph{Step 4: a moving spatial coordinate.}
Set $b_j=b-jh$, and suppose inductively that
$Z=0$ on $[0,1]\times\{z_1>b_j\}$. Fix $0<\delta<1/4$.  Choose a smooth function
$\eta_\delta:[0,1]\to[0,1]$ which is zero on
$[0,\delta]\cup[1-\delta,1]$ and one on $[2\delta,1-2\delta]$.
For example, take
\[
 \eta_\delta(t)=\eta(t/\delta)\eta((1-t)/\delta),
\]
where $\eta:\R\to[0,1]$ is smooth, zero on $(-\infty,1]$, and one on
$[2,\infty)$.  Define
\[
 w_\delta(t)=b_j-h\eta_\delta(t).
\]
Note that $b_{j+1}\le w_\delta\le b_j$, the function $w_\delta$ equals $b_j$
near the time endpoints, and it equals $b_{j+1}$ on the central interval.
Differentiating, we get
\[
 \|w_\delta'\|_\infty\le Ch\delta^{-1},
 \qquad
 \|w_\delta''\|_\infty\le Ch\delta^{-2}.
\]
We now translate the solution and multiply it by a phase:
\[
 \widetilde Z_\delta(t,y)
 =Z(t,y_1+w_\delta(t),y_2),
 \qquad
 U_\delta(t,y)
 =e^{-iw_\delta'(t)y_1/2}\widetilde Z_\delta(t,y).
\]
It is clear that
$U_\delta\in X'$ and
$\|U_\delta\|_{L^\infty_tL^2_y}=\|Z\|_{L^\infty_tL^2_z}$.
Moreover, $U_\delta\in C([0,1];L^2)$. 

The equation for $U_\delta$ includes two additional zeroth-order terms. The motivation for introducing the modulation above is to eliminate the first-order term generated from the translation by $w_{\delta}(t)$. To compute the zeroth order terms, put
$W_\delta(t,y)=W(t,y_1+w_\delta(t),y_2)$.  The chain rule gives
\[
 \HH_y\widetilde Z_\delta
 =W_\delta\widetilde Z_\delta
   +iw_\delta'\partial_{y_1}\widetilde Z_\delta.
\]
Differentiating the phase factor gives, more explicitly,
\begin{align*}
 i\partial_tU_\delta
 &=e^{-iw_\delta'y_1/2}
   \left(i\partial_t\widetilde Z_\delta
         +\tfrac12w_\delta''y_1\widetilde Z_\delta\right),\\
 \Delta_yU_\delta
 &=e^{-iw_\delta'y_1/2}
   \left(\Delta_y\widetilde Z_\delta
         -iw_\delta'\partial_{y_1}\widetilde Z_\delta
         -\tfrac14(w_\delta')^2\widetilde Z_\delta\right).
\end{align*}
The first-order terms cancel when these identities are added.  Hence
\[
 \HH_yU_\delta=(W_\delta+M_\delta)U_\delta,
 \qquad
 M_\delta(t,y)=\tfrac12w_\delta''(t)y_1
               -\tfrac14(w_\delta'(t))^2.
 \tag{1.4c}\label{eq:IK-moving-equation}
\]

We next check the support and forcing hypotheses of
\eqref{eq:IK-bounded-weight}.  If $y_1>h$, then
$y_1+w_\delta(t)>b_j$, so $U_\delta(t,y)=0$ by the induction hypothesis.
Since $h\le1$, this gives the required vanishing for $y_1>1$.
For $t\in[0,\delta]\cup[1-\delta,1]$, we have
$w_\delta=b_j$ and $w_\delta'=0$, so in fact $U_\delta=0$ for $y_1>0$
at those times.  In particular, both time-boundary traces are supported
in $\{y_1\le0\}$.

Define the constant
\[
 K_\delta=\tfrac12\|w_\delta''\|_\infty
          +\tfrac14\|w_\delta'\|_\infty^2.
\]
Note that $|M_\delta(t,y)|\le K_\delta(1+|y_1|)$ everywhere and
$|M_\delta(t,y)|\le K_\delta$ when $0<y_1<1$.
Also,
\[
 W_\delta U_\delta
 =e^{-iw_\delta'y_1/2}(WZ)(t,y_1+w_\delta(t),y_2)\in X,
 \qquad
 \|W_\delta U_\delta\|_X=\|WZ\|_X.
\]
Since
\[
 \|(1+|y_1|)^{-1}M_\delta U_\delta\|_{L^1_tL^2_y}
 \le K_\delta\|U_\delta\|_{L^1_tL^2_y}
 \le K_\delta\|Z\|_{L^\infty_tL^2_z},
\]
equation \eqref{eq:IK-moving-equation} gives
$(1+|y_1|)^{-1}\HH_yU_\delta\in X$.
All the hypotheses of the Carleman inequality have now been verified.

\noindent\emph{Step 5: estimating the right-hand side and absorbing two terms.}
For fixed $j$ and $\delta$,  put
\[
 U_{\delta,\beta}^{+}
 =\1_{\{0<y_1<1\}}e^{\phi_\beta(y_1)}U_\delta,
 \qquad
 A_\beta=\|U_{\delta,\beta}^{+}\|_{X'},
 \qquad
 B_\beta=\|U_{\delta,\beta}^{+}\|_{L^1_tL^2_y}.
\]
Both norms are finite for every $\beta$. Indeed, the weight is at most $e^\beta$
on this strip, and $U_\delta\in X'$.  Notice also that $B_\beta\le A_\beta$
because the time interval has length one.

We first consider the potential term $W_\delta U_\delta$ in the region $y_1>0$.
Whenever $U_\delta(t,y)\ne0$ there, the original spatial coordinate lies in
\[
 b_j-h\le w_\delta(t)<y_1+w_\delta(t)\le b_j.
\]
Thus, only the restriction of $W$ to the fixed strip
$S_j=\{b_j-h<z_1<b_j\}$ enters this part of the product.
More precisely, define
\[
 A_{j,\delta}(t,y)
 =\1_{\{y_1>0\}}(\1_{S_j}W)(t,y_1+w_\delta(t),y_2).
\]
Then $A_{j,\delta}U_{\delta,\beta}^{+}
=\1_{\{0<y_1<1\}}e^{\phi_\beta(y_1)}W_\delta U_\delta$
almost everywhere, and thus
\[
 \|A_{j,\delta}\|_Y\le\|\1_{S_j}W\|_Y\le\varepsilon_*.
\]
The last inequality uses \eqref{eq:strip-smallness} with
$a=b_j-h=b-(j+1)h\le b-h$.
Applying \eqref{eq:XY-multiplier} now gives
\[
 \|\1_{\{0<y_1<1\}}e^{\phi_\beta(y_1)}
       W_\delta U_\delta\|_X
 \le\varepsilon_* A_\beta.
\]
On the same strip, the bound for $M_\delta$ gives
\[
 \|\1_{\{0<y_1<1\}}e^{\phi_\beta(y_1)}
       M_\delta U_\delta\|_X
 \le\|M_\delta U_{\delta,\beta}^{+}\|_{L^1_tL^2_y}
 \le K_\delta B_\beta.
\]
Next, we estimate the source term $\HH_yU_\delta$ in the region $y_1\leq 0$. Here, this expression need not be small. However, it is enough that its
weighted norm be bounded independently of $\beta$.
By \eqref{eq:IK-weight-left} and the restriction property of $X$, we have
\begin{align*}
 \|\1_{\{y_1\le0\}}e^{\phi_\beta(y_1)}W_\delta U_\delta\|_X
 &\le\|W_\delta U_\delta\|_X=\|WZ\|_X,\\
 \|\1_{\{y_1\le0\}}e^{\phi_\beta(y_1)}M_\delta U_\delta\|_X
 &\le K_\delta\|U_\delta\|_{L^1_tL^2_y}
 \le K_\delta\|Z\|_{L^\infty_tL^2_z}.
\end{align*}
The source term also vanishes on $y_1>1$, by
\eqref{eq:IK-moving-equation} and the support of $U_\delta$.
Finally, the support properties of $U_\delta(0)$ and $U_\delta(1)$,
together with \eqref{eq:IK-weight-left}, give
\[
 \|e^{\phi_\beta(y_1)}U_\delta(0)\|_2
 +\|e^{\phi_\beta(y_1)}U_\delta(1)\|_2
 \le\|Z(0)\|_2+\|Z(1)\|_2.
\]
While these boundary terms need not vanish on the whole plane, the above shows that we can estimate their corresponding weighted norms independently of $\beta$. Now set
\[
 D_\delta=\|WZ\|_X+K_\delta\|Z\|_{L^\infty_tL^2_z}
          +\|Z(0)\|_2+\|Z(1)\|_2<\infty.
\]
Substituting the preceding estimates into
\eqref{eq:IK-bounded-weight} gives
\[
 A_\beta+\beta^{c_0}B_\beta
 \le C_0\varepsilon_*A_\beta+C_0K_\delta B_\beta+C_0D_\delta.
 \tag{1.4d}\label{eq:IK-two-absorptions}
\]
Our choice of $\varepsilon_*$ gives $C_0\varepsilon_*=1/4$.
For the fixed $\delta$, take $\beta$ large enough that
$C_0K_\delta\le\beta^{c_0}/2$.
Subtracting the corresponding terms from the two sides of
\eqref{eq:IK-two-absorptions} yields
\[
 \tfrac34 A_\beta+\tfrac12\beta^{c_0}B_\beta
 \le C_0D_\delta,
 \qquad
 A_\beta\le2C_0D_\delta.
 \tag{1.4e}\label{eq:IK-uniform-weighted-bound}
\]
\medskip
\noindent\emph{Step 6: the limits and the induction.}
Fix $0<\alpha<1$.  Since $\phi_\beta(y_1)=\beta y_1$ on $(0,1)$, we have
\[
 \|\1_{\{\alpha<y_1<1\}}U_\delta\|_{L^1_tL^2_y}
 \le e^{-\beta\alpha}B_\beta
 \le 2C_0D_\delta e^{-\beta\alpha}.
\]
For this fixed $\delta$, the constant $D_\delta$ is independent of $\beta$.
Letting $\beta\to\infty$ proves that $U_\delta=0$ almost everywhere on
$[0,1]\times\{\alpha<y_1<1\}$.  Taking the countable sequence
$\alpha=1/m$, $m\ge2$, and recalling that $U_\delta=0$ for $y_1>1$, gives
$U_\delta=0$ almost everywhere where $y_1>0$.
Moreover, the map
$t\mapsto\1_{\{y_1>0\}}U_\delta(t)$ is continuous in $L^2$. Hence,
\[
 Z(t)=0\text{ on }\{z_1>w_\delta(t)\}
 \quad\text{for every }t\in[0,1].
\]
In particular, $Z(t)=0$ on $\{z_1>b_{j+1}\}$ whenever
$2\delta\le t\le1-2\delta$.  Every $t\in(0,1)$ belongs to one such central
interval, by choosing $\delta<\frac12\min\{t,1-t\}$.
Hence the new half-space vanishing holds at every interior time.
Continuity of $t\mapsto\1_{\{z_1>b_{j+1}\}}Z(t)$ extends it to $t=0,1$. This proves the induction step.  Starting at $j=0$, we obtain
\[
 Z(t)=0\text{ on }\{z_1>b-jh\}
 \quad\text{for every }t\in[0,1]\text{ and every integer }j\ge0.
\]
These half-spaces increase to all of $\R^2$.  For each fixed $t$,
monotone convergence therefore gives
\[
 \|Z(t)\|_2^2
 =\lim_{j\to\infty}\int_{\{z_1>b-jh\}}|Z(t,z)|^2\,dz=0.
\]
This proves the desired conclusion.
\end{proof}

We next recover the normal Cauchy data needed to create a half-space solution.

\begin{lemma}
\label{lem:endpoint-wronskian}
Under the hypotheses of Theorem~\ref{thm:time-potential}, on every interval
$I$ with compact closure in $I_0$ one has
\[
 u,v\in L^4(I;L^\infty(\R)),
 \qquad
 \chi u,\chi v\in L^2(I;H^{1/2}(\R))
 \quad(\chi\in C_c^\infty(\R)).
 \tag{1.5}\label{eq:linear-endpoint-regularity}
\]
Moreover, for almost every $t\in I_0$,
\[
 \overline u\,\partial_xu=\overline v\,\partial_xv,
 \qquad
 v\,\partial_xu-u\,\partial_xv=0
 \quad\text{in }\D'(\R_x),
 \tag{1.6}\label{eq:current-wronskian}
\]
where the first product is interpreted by
$H^{-1/2}$--$H^{1/2}$ duality.
\end{lemma}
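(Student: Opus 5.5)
The proof splits into two parts. The first is the regularity statement (1.5), obtained from the Duhamel formula and linear estimates. The second is the current identity (1.6), obtained by justifying the continuity equation (1.4) at $L^2$ regularity.

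\emph{Regularity.} Fix $I$ with compact closure in $I_0$, and take $J\supset\bar I$ compact with the splitting $V=V_\infty+V_2$. We work with $w\in\{u,v\}$.

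First we get the Strichartz bound. The homogeneous endpoint estimate $\|e^{it\partial_x^2}f\|_{L^4_tL^\infty_x}\lesssim\|f\|_2$ handles the free part of (2.1). For the Duhamel part we use the inhomogeneous Strichartz estimate (Keel--Tao) with dual norm $L^1_tL^2_x+L^{4/3}_tL^1_x$. Here $V_\infty w\in L^1_tL^2_x$, and $V_2w\in L^2_tL^1_x\subset L^{4/3}_tL^1_x$ on the bounded interval $J$. This gives $w\in L^4(J;L^\infty)$ directly, with no bootstrap or smallness needed, since only $w\in L^\infty_tL^2_x$ enters the source.

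Next we get local smoothing. The homogeneous Kato estimate $\|\chi e^{it\partial_x^2}f\|_{L^2_tH^{1/2}_x}\lesssim\|f\|_2$ handles the free part. For the Duhamel term we use the inhomogeneous local smoothing estimates of Vega--Visciglia type, which map $L^1_tL^2_x$ into $L^2_tH^{1/2}_{\loc}$ (via Minkowski and the homogeneous bound) and $L^{4/3}_tL^1_x$ into $L^2_tH^{1/2}_{\loc}$. The latter is the dual of the $L^4_tL^\infty_x$ Strichartz estimate, combined with the smoothing estimate through a Christ--Kiselev/$TT^*$ argument. I regard this as standard and would simply cite it.

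\emph{Continuity equation.} Next I verify that $\partial_t|w|^2+2\partial_x\operatorname{Im}(\overline w\,\partial_xw)=0$ in $\D'(I\times\R)$. The current $\overline w\,\partial_xw$ is defined locally as the pairing of $\chi w\in L^2_tH^{1/2}$ with $\partial_x(\chi w)\in L^2_tH^{-1/2}$, which makes it an $L^1_t$-valued distribution. Mollify in space with $P_N$, which commutes with the propagator, so that $w_N=P_Nw$ solves $i\partial_tw_N+\partial_x^2w_N=P_N(Vw)$. For $w_N$ the continuity identity holds with an error term $2\operatorname{Im}(\overline{w_N}P_N(Vw))$.

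As $N\to\infty$, $\overline{w_N}P_N(Vw)\to\overline w Vw$, which is real. The convergence takes place in $L^1_{t,x}$ on $I\times\R$ after pairing with a test function, using $w_N\to w$ in $L^\infty L^2\cap L^4L^\infty$ and $P_N(Vw)\to Vw$ in $L^1L^2+L^{4/3}L^1$. So the error's imaginary part tends to zero. Meanwhile $\overline{w_N}\partial_xw_N\to\overline w\partial_xw$ weakly by the local $H^{1/2}$ convergence. I expect this limiting step, in particular controlling the product term with only $H^{1/2}_{\loc}$ regularity and the $L^{4/3}L^1$ piece, to be the main technical obstacle. The workaround is to place the commutator pairing in the duality $L^4L^\infty\times L^{4/3}L^1$.

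\emph{Conclusion.} Subtracting the continuity equations for $u$ and $v$ and using $|u|^2=|v|^2$ gives $\partial_x\operatorname{Im}(\overline u u_x-\overline v v_x)=0$. Hence for a.e.\ $t$ this distribution is a constant $c(t)$ in $x$. For a.e.\ $t$ it also lies in $H^{-1/2}_{\loc}$, with a global bound showing it is a derivative-type object with no constant part: writing $\overline u u_x=\partial_x(\overline u u)-u\overline{u_x}$ and testing against dilated bumps $\chi(x/R)$, the average over $[-R,R]$ tends to $0$. So $c(t)=0$. The real parts agree because $2\operatorname{Re}(\overline w w_x)=\partial_x|w|^2$, and this gives the first identity of (1.6).

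For the Wronskian identity, pick smooth approximations and compute formally:
\[
 \overline u\,(v u_x-u v_x)=v\,\overline u u_x-|u|^2v_x=v\,\overline v v_x-|v|^2 v_x=0 .
\]
Then divide by $\overline u$ where possible. To avoid division I would instead argue through the identity $|u|^2(vu_x-uv_x)\cdot\overline{(vu_x-uv_x)}$, or more cleanly verify $\overline{(vu_x-uv_x)}\cdot(\ldots)$ directly from mollified versions and pass to the limit. This last algebraic step at low regularity is delicate, and I would justify it with the same mollification-and-duality scheme.
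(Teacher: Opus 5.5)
Your regularity argument is fine. The Christ--Kiselev step is avoidable: once $w\in L^4_tL^\infty_x$, H\"older gives $V_2w\in L^1_tL^2_x$, so Minkowski plus the homogeneous smoothing bound suffice, which is what the paper does. The mollified derivation of the continuity equation in $\D'(I\times\R)$ is also fine.

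\emph{First gap: the constant $c(t)$.} That equation, tested only against compactly supported functions, is unchanged if any $c(t)$ is added to the current, so some global input is needed. The input you propose does not supply it. The identity $\overline uu_x=\partial_x|u|^2-u\overline{u_x}$ shows that averages of $\operatorname{Re}(\overline uu_x)$ over $[-R,R]$ vanish. For the imaginary part, however, it is the tautology $\operatorname{Im}(\overline uu_x)=-\operatorname{Im}(u\overline{u_x})$. At $L^2$ regularity, local smoothing only bounds $\int_I|\langle\operatorname{Im}(\overline uu_x),\chi(\cdot/R)\rangle|\,dt$ by $O(R)$. That bounds the averages by $O(1)$, not $o(1)$, and you have no global bound on the current.

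The paper avoids the constant altogether by testing with the bounded primitive $a(x)=\int_{-\infty}^x\phi$ instead of $\phi$. In your setting this is easy to do. The function $w_N=P_Nw$ is a strong $H^2$-valued solution with forcing $P_N(Vw)\in L^1_tH^2_x$, so
\[
 \frac d{dt}\int_\R a|w_N|^2
 =2\langle\operatorname{Im}(\overline{w_N}\,\partial_xw_N),\phi\rangle
 +2\operatorname{Im}\int_\R a\,\overline{w_N}\,P_N(Vw)
\]
holds rigorously. As $N\to\infty$:
\begin{itemize}
\item the left side needs only $C_tL^2_x$ convergence, since $a$ is bounded;
\item the first term on the right needs only local $H^{1/2}$ convergence near $\supp\phi$;
\item the last term is the one you already control.
\end{itemize}
Since $\int a|u|^2=\int a|v|^2$, this gives $\langle\operatorname{Im}(\overline uu_x-\overline vv_x),\phi\rangle=0$ for every $\phi$, including those of nonzero mean. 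No constant ever appears.

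\emph{Second, more serious gap: the Wronskian identity.} Here your proposal contains no actual argument. The formal identity $\overline u\,(vu_x-uv_x)=0$ cannot be turned into $vu_x-uv_x=0$ by dividing where possible. The expression $vu_x-uv_x$ is only a distribution, and you have no control of the zero set of $u(t)$. Mollifying does not help either: it destroys both hypotheses $|u|=|v|$ and $\overline uu_x=\overline vv_x$, so the mollification-and-duality scheme has nothing to act on.

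The missing idea is to use a regularized quotient as a test function. With $\rho=|u|^2=|v|^2$, set $h_\varepsilon=uv/(\rho+\varepsilon)$. For a.e.\ $t$ you have $u(t),v(t)\in H^{1/2}_{\loc}\cap L^\infty_{\loc}$; this is where the $L^4_tL^\infty_x$ bound is needed. Hence $h_\varepsilon\phi$ is, after a density argument, an admissible test function for the current identity. Since $\overline uh_\varepsilon=s_\varepsilon v$ and $\overline vh_\varepsilon=s_\varepsilon u$ with $s_\varepsilon=\rho/(\rho+\varepsilon)$, testing gives
\[
 \langle v\,\partial_xu-u\,\partial_xv,\phi\rangle
 =\langle\partial_xu,T_\varepsilon(v)\phi\rangle
 -\langle\partial_xv,T_\varepsilon(u)\phi\rangle,
 \qquad
 T_\varepsilon(z)=\frac{\varepsilon z}{|z|^2+\varepsilon}.
\]
The maps $T_\varepsilon$ are uniformly Lipschitz, vanish at $0$, and tend to $0$ pointwise. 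Dominated convergence in the Gagliardo seminorm then gives $T_\varepsilon(u),T_\varepsilon(v)\to0$ in $H^{1/2}_{\loc}$. By $H^{-1/2}$--$H^{1/2}$ duality, the right-hand side vanishes as $\varepsilon\to0$.
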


\begin{proof}
Fix $I=[a,b]$. The endpoint homogeneous and inhomogeneous
Strichartz estimates~\cite{KeelTao} give
\begin{align*}
 \|e^{i(t-a)\partial_x^2}w(a)\|_{L^4_tL^\infty_x(I)}
 &\lesssim\|w(a)\|_2,\\
 \left\|\int_a^t e^{i(t-s)\partial_x^2}F(s)\,ds
 \right\|_{L^4_tL^\infty_x(I)}
 &\lesssim\|F\|_{L^1_tL^2_x+L^{4/3}_tL^1_x}.
\end{align*}
Apply these estimates to the Duhamel formula.  For either solution $w \in \{u,v\}$, this
gives
\begin{align*}
 \|w\|_{L^4_tL^\infty_x(I)}
 \lesssim_I{}&\|w\|_{L^\infty_tL^2_x(I)}
 +\|V_\infty\|_{L^1_tL^\infty_x(I)}
       \|w\|_{L^\infty_tL^2_x(I)}\\
 &+|I|^{1/4}\|V_2\|_{L^2_{t,x}(I\times\R)}
       \|w\|_{L^\infty_tL^2_x(I)}.
\end{align*}
Moreover,
\[
 \|V_2w\|_{L^1_tL^2_x(I)}
 \le |I|^{1/4}\|V_2\|_{L^2_{t,x}(I\times\R)}
                  \|w\|_{L^4_tL^\infty_x(I)}.
\]
Together with the bound for $V_\infty w$, this gives $Vw\in L^1(I;L^2)$.
The local smoothing estimate~\cite{Vega,VegaVisciglia} states that, for
$\chi\in C_c^\infty(\R)$,
\[
 \|\chi e^{it\partial_x^2}g\|_{L^2(I;H^{1/2})}
 \le C_{I,\chi}\|g\|_2.
\]
We then apply this estimate to the homogeneous term in the Duhamel formula.  For the
inhomogeneous term, apply it for each fixed forcing time $s$ and then use
Minkowski's integral inequality.  The two preceding $L^1_tL^2_x$ bounds give
\[
 \left\|\chi\int_a^t e^{i(t-s)\partial_x^2}(Vw)(s)\,ds
 \right\|_{L^2_tH^{1/2}_x}
 \lesssim_{I,\chi}\|Vw\|_{L^1_tL^2_x}.
\]
This proves \eqref{eq:linear-endpoint-regularity}.  For the remainder of the
proof, take a bounded interval $I$ with $\overline I\subset I_0$.

For $w\in H^{1/2}_{\loc}\cap L^\infty_{\loc}$ define
\[
 \langle\overline w\,\partial_xw,\phi\rangle
 :=\langle\partial_xw,\phi\overline w\rangle_{H^{-1/2},H^{1/2}},
 \qquad \phi\in C_c^\infty(\R).
\]
This pairing is well-defined, since after inserting a cutoff equal to one on
$\supp\phi$, multiplication by $\phi$ maps
$H^{1/2}\cap L^\infty$ into $H^{1/2}$, while differentiation maps
$H^{1/2}$ into $H^{-1/2}$. 

With that object being well-defined, the continuity equation
\[
 \partial_t|w|^2+2\partial_x\operatorname{Im}(\overline w\,\partial_xw)=0
 \tag{1.7}\label{eq:continuity-general}
\]
holds in distributions. Let now $\phi\in C_c^\infty(\R)$ and
$\eta\in C_c^\infty(I)$ be real-valued, and set
$a(x)=\int_{-\infty}^x\phi(y)\,dy$. It follows that $a$ is bounded and $a'=\phi$. Although $a$ need not have compact support, it is a bounded Lipschitz function and $a'=\phi$ is compactly supported.  One may therefore test the
 Schr\"odinger equation \eqref{eq:schr-pot} against a regularization of
$\eta(t)a(x)\overline{w(t,x)}$.  Multiplication by $a$ is bounded on
$H^{1/2}$, the potential contribution is real and cancels after taking the
imaginary part, and all remaining pairings are integrable in time by
\eqref{eq:linear-endpoint-regularity}.  Approximation by smooth functions
then gives
\[
 -\int_I\eta'(t)\int_\R a(x)|w(t,x)|^2\,dx\,dt
 =2\int_I\eta(t)
 \langle\operatorname{Im}(\overline w\,\partial_xw),\phi\rangle\,dt.
\]
Equivalently,
\[
 \frac d{dt}\int_\R a|w|^2
 =2\langle\operatorname{Im}(\overline w\,\partial_xw),\phi\rangle
 \tag{1.8}\label{eq:localized-continuity-general}
\]
in $\D'(I)$.  The left-hand sides for $u$ and $v$ agree, since their absolute values agree.  Hence
\[
 \int_I\eta(t)
 \langle\operatorname{Im}(\overline u\,\partial_xu-\overline v\,\partial_xv),\phi\rangle\,dt=0
\]
for every $\eta\in C_c^\infty(I)$ and every spatial test function $\phi$. Now, choose a countable dense family of spatial test functions.  After removing
one null set of times, the current identity holds for every member of this
family and therefore, by continuity of the distributional pairing, for every
$\phi$.  Their real parts agree because the distributional product rule gives
\[
 2\operatorname{Re}(\overline w\,\partial_xw)=\partial_x|w|^2.
\]
This rule follows by approximating $w$ in local $H^{1/2}$ while retaining its
local $L^\infty$ bound, or directly from the Gagliardo product estimate.  We
have proved the first identity in \eqref{eq:current-wronskian} for almost every
time.

We now prove the second identity. First, suppose
$p,q\in H^{1/2}_{\loc}\cap L^\infty_{\loc}$,
$|p|=|q|$, and
$\overline p\,\partial_xp=\overline q\,\partial_xq$ in the sense of distributions. Fix $\phi\in C_c^\infty(\R)$. By multiplying $p$ and $q$ by a
common real-valued smooth cutoff equal to one near $\supp\phi$,
we may assume that $p,q\in H^{1/2}\cap L^\infty$.
This preserves the hypothesis $|p|=|q|$. Define
\[
 \rho=|p|^2=|q|^2,
 \quad s_\varepsilon=\frac{\rho}{\rho+\varepsilon},
 \quad h_\varepsilon=\frac{pq}{\rho+\varepsilon}.
\]
The Gagliardo seminorm~\cite{DiNezzaPalatucciValdinoci} is
\[
 [a]_{H^{1/2}}^2=\iint_{\R^2}
 \frac{|a(x)-a(y)|^2}{|x-y|^2}\,dx\,dy.
\]
By the algebra property of $H^{1/2}\cap L^\infty$ and simple estimates using the seminorm above, we have 
$h_\varepsilon$, $s_\varepsilon p$, and $s_\varepsilon q\in H^{1/2}\cap L^\infty$.  We next observe that we can use $h_\varepsilon\phi$ as a test function. Indeed, by approximating it by smooth, compactly supported functions converging in
$H^{1/2}$ and almost everywhere, the Gagliardo formula and dominated
convergence show that their products with $\overline p$ and $\overline q$
converge in $H^{1/2}$ as well, so the identity extends to this test function.
Since
\[
 \overline p h_\varepsilon=s_\varepsilon q,
 \qquad
 \overline q h_\varepsilon=s_\varepsilon p,
\]
testing the identity $\overline p\,\partial_xp=\overline q\,\partial_xq$
against $h_\varepsilon\phi$ gives
\[
 \langle\partial_xp,s_\varepsilon q\phi\rangle
 -\langle\partial_xq,s_\varepsilon p\phi\rangle=0.
\]
Thus, letting $T_\varepsilon(z)=\varepsilon z/(|z|^2+\varepsilon)$,
\[
 \langle q\,\partial_xp-p\,\partial_xq,\phi\rangle
 =\langle\partial_xp,T_\varepsilon(q)\phi\rangle
  -\langle\partial_xq,T_\varepsilon(p)\phi\rangle.
\]
The maps $T_\varepsilon$ are uniformly Lipschitz, vanish at zero, and satisfy
$|T_\varepsilon(z)|\le\sqrt\varepsilon/2$.  Thus they converge pointwise to
zero.  Moreover,
\[
 \frac{|T_\varepsilon(p(x))-T_\varepsilon(p(y))|^2}{|x-y|^2}
 \le C\frac{|p(x)-p(y)|^2}{|x-y|^2},
\]
with $C$ independent of $\varepsilon$.  The Gagliardo formula and dominated
convergence give
$T_\varepsilon(p),T_\varepsilon(q)\to0$ in $H^{1/2}_{\loc}$.  After
multiplication by the fixed test function $\phi$, duality gives
\begin{align*}
 |\langle\partial_xp,T_\varepsilon(q)\phi\rangle|
 &\le \|\partial_xp\|_{H^{-1/2}(K)}
       \|T_\varepsilon(q)\phi\|_{H^{1/2}},\\
 |\langle\partial_xq,T_\varepsilon(p)\phi\rangle|
 &\le \|\partial_xq\|_{H^{-1/2}(K)}
       \|T_\varepsilon(p)\phi\|_{H^{1/2}},
\end{align*}
where $K$ contains $\supp\phi$.  Both right-hand sides above tend to zero.  It follows thus by passing to
the limit that $q\,\partial_xp-p\,\partial_xq=0$. The second identity in
\eqref{eq:current-wronskian} then follows at once. 
\end{proof}
We are now ready to prove our main result. 
\begin{proof}[Proof of Theorem~\ref{thm:time-potential}]
Fix a bounded interval $I$ with $\overline I\subset I_0$.  On this interval
set $\rho=|u|^2=|v|^2$ and define the exterior product
\[
 F(t,x,y)=u(t,x)v(t,y)-v(t,x)u(t,y).
 \tag{1.9}\label{eq:exterior-product}
\]
We show that $F$ solves a two-dimensional Schr\"odinger equation with potential given by $V(t,x)+V(t,y)$. This can be easily seen by applying
$i\partial_t+\partial_x^2+\partial_y^2$ to each term in the definition of $F$.  For instance, applying it to the first one gives
\begin{align*}
 (i\partial_t+\partial_x^2+\partial_y^2)
   (u(t,x)v(t,y))=V(t,x)u(t,x)v(t,y)+V(t,y)u(t,x)v(t,y).
\end{align*}
The identical calculation for $v(t,x)u(t,y)$ gives
\[
 i\partial_tF+\partial_x^2F+\partial_y^2F=(V(t,x)+V(t,y))F
 \tag{1.10}\label{eq:exterior-equation}
\]
in distributions. On the interval $I$, we have
\[
 F\in C(I;L^2(\R^2))\cap L^4(I\times\R^2).
 \tag{1.11}\label{eq:exterior-class}
\]
Indeed, interpolation between $L^\infty_tL^2_x$ and
$L^4_tL^\infty_x$ gives
\[
 \|w\|_{L^8_tL^4_x}
 \le\|w\|_{L^\infty_tL^2_x}^{1/2}
     \|w\|_{L^4_tL^\infty_x}^{1/2}.
\]
Consequently
\begin{equation*}
\begin{split}
 \|u(t,x)v(t,y)\|_{L^4_{t,x,y}}^4
 &=\int_I\int_\R\int_\R|u(t,x)|^4|v(t,y)|^4\,dx\,dy\,dt\\
 &=\int_I\|u(t)\|_4^4\|v(t)\|_4^4\,dt\\
 &\le\|u\|_{L^8_tL^4_x}^4\|v\|_{L^8_tL^4_x}^4<\infty.
\end{split}
\end{equation*}
The same estimate holds for $v(t,x)u(t,y)$.
Moreover, the right side of \eqref{eq:exterior-equation} belongs to
$L^1(I;L^2(\R^2))$.  For example, Fubini's theorem gives
\begin{align*}
 \|V_2(t,x)u(t,x)v(t,y)\|_{L^2_{x,y}}=\|V_2(t)u(t)\|_{L^2_x}\|v(t)\|_{L^2_y}
 \le\|V_2(t)\|_2\|u(t)\|_\infty\|v(t)\|_2.
\end{align*}
The last expression is integrable in $t$ by Cauchy--Schwarz, because
$V_2\in L^2_tL^2_x$, $u\in L^2_tL^\infty_x$, and
$v\in L^\infty_tL^2_x$.  For $V_\infty$, we use
\[
 \|V_\infty(t,x)u(t,x)v(t,y)\|_2
 \le\|V_\infty(t)\|_\infty\|u(t)\|_2\|v(t)\|_2
\]
and $V_\infty\in L^1_tL^\infty_x$.  The terms with $x$ and $y$ interchanged follow by identical means. Let us now introduce orthogonal coordinates:
\[
 r=\frac{x+y}{\sqrt2},
 \qquad s=\frac{x-y}{\sqrt2}.
\]
Then
\[
 i\partial_tF+\partial_r^2F+\partial_s^2F=QF,
 \qquad
 Q(t,r,s)=V\!\left(t,\frac{r+s}{\sqrt2}\right)
          +V\!\left(t,\frac{r-s}{\sqrt2}\right).
 \tag{1.12}\label{eq:rotated-exterior}
\]
Interchanging $x$ and $y$ changes the sign of $F$.  In the new coordinates
this interchange fixes $r$ and sends $s$ to $-s$.  Therefore
\[
 F(t,r,-s)=-F(t,r,s).
\]
We claim that both of its Cauchy traces at $s=0$ vanish.  For $\psi\in C_c^\infty(I\times\R_r)$ set
\[
 H_\psi(s)=\langle F(\cdot,\cdot,s),\psi\rangle_{t,r}.
\]
Cauchy--Schwarz in $(t,r)$ and Fubini's theorem give
$H_\psi\in L^2_{\loc}(\R_s)$.  Pairing
\eqref{eq:rotated-exterior} with $\psi$ and integrating by parts in $t$ and
$r$ gives, in distributions of $s$,
\[
 H_\psi''(s)
 =\langle QF,\psi\rangle_{t,r}
 +\langle F,i\partial_t\psi-\partial_r^2\psi\rangle_{t,r}.
 \tag{1.12a}\label{eq:scalar-trace-equation}
\]
For a compact $s$-interval, Minkowski's inequality bounds the first term in
$L^2_s$ by a constant times $\|QF\|_{L^1_tL^2_{r,s}}$; the second is bounded
by $\|F\|_{L^1_tL^2_{r,s}}$.  Thus $H_\psi''\in L^2_{\loc}(\R_s)$.  Therefore, we have
$H_\psi\in H^2_{\loc}(\R_s)$. By Sobolev embeddings, we therefore have $H_{\psi}\in C^1$, so both traces at $s=0$ exist. We now compute these two traces. First, the oddness of $F$ directly gives
$H_\psi(0)=0$. To compute $H'_{\psi}(0)$, let $z=(r-s)/\sqrt2$. We have
\begin{align*}
 H_\psi(s)=\sqrt2\iint
 &[u(t,z+\sqrt2s)v(t,z)-v(t,z+\sqrt2s)u(t,z)]\psi(t,\sqrt2z+s)\,dz\,dt.
\end{align*}
For $p\in H^{1/2}$, we also recall the simple property
\[
 \frac{p(\cdot+h)-p}{h}\longrightarrow \partial_xp
 \quad\text{in }H^{-1/2}.
\]
Indeed, on the Fourier-transform side the multiplier is
$(e^{2\pi ih\xi}-1)/h$, which converges pointwise to $2\pi i\xi$ and is bounded by
$2\pi|\xi|$.  Dominated convergence with the $H^{1/2}$ weight proves the claim.
After inserting cutoffs supported near the projection of $\supp\psi$, pair
this convergence with the other local $H^{1/2}$ factor.  The difference
quotients are uniformly bounded from $H^{1/2}$ to $H^{-1/2}$, while the two
local $H^{1/2}$ norms belong to $L^2_t$.  Cauchy--Schwarz in time therefore
justifies passage to the limit under the temporal integral.  The term obtained by
differentiating $\psi(t,\sqrt2z+s)$ vanishes at $s=0$ because the bracket in
the preceding equation vanishes there.  Lemma~\ref{lem:endpoint-wronskian}
then gives
\[
 H_\psi'(0)=2\int_I
 \bigl\langle v\,\partial_xu-u\,\partial_xv,
 \psi(t,\sqrt2\,\cdot)\bigr\rangle\,dt=0.
\]
Combining this with $H_{\psi}(0)=0$, we find that the zero extension
\[
 G(t,r,s)=\1_{\{s>0\}}F(t,r,s)
\]
solves the same equation as $F$ on all of $I\times\R^2$ with the same potential.  Indeed, one can compute the distributional jump formula,
\[
 \partial_s^2(\1_{\{s>0\}}F)
 =\1_{\{s>0\}}\partial_s^2F
  +\delta_{s=0}(\partial_sF)|_{s=0}+\delta'_{s=0}F|_{s=0}.
\]
Both boundary distributions vanish by the two trace identities proved above.  Therefore, it follows that $G$ satisfies
\[
 i\partial_tG+\partial_r^2G+\partial_s^2G=QG.
\]
Clearly $G$ inherits the bound \eqref{eq:exterior-class} from $F$. Moreover,
$QG\in L^1_tL_x^2$ by the estimate already proved for $QF$. It remains to apply Proposition~\ref{prop:strip-propagation}.  Choose a compact
subinterval $J\subset I$ with sufficiently small length, so that
\[
 2\|V_\infty\|_{L^1(J;L^\infty)}<\varepsilon_*/2.
\]
For $S_{a,h}=\{(r,s):a<s<a+h\}$, Fubini's theorem and the substitution
$x=(r+s)/\sqrt2$ give
\begin{equation}\label{eq:ridge-strip}
\begin{split}
 \left\|\1_{S_{a,h}}
 V_2\!\left(t,\frac{r+s}{\sqrt2}\right)\right\|_{L^2_{t,r,s}}^2&=\int_J\int_a^{a+h}\int_\R
 \left|V_2\!\left(t,\frac{r+s}{\sqrt2}\right)\right|^2dr\,ds\,dt\\
 &=\sqrt2h\int_J\int_\R|V_2(t,x)|^2\,dx\,dt
\\
&=\sqrt2h\|V_2\|_{L^2(J\times\R)}^2
\end{split}
\end{equation}
and the same identity holds for the second ridge.  Choose $h>0$ so that their
combined $L^2$ norm is below $\varepsilon_*/2$.  After reflecting $s$ to make
the zero half-space $z_1>0$, \eqref{eq:ridge-strip} and
the hypotheses on the potential show that \eqref{eq:strip-smallness} holds.
Proposition~\ref{prop:strip-propagation} then gives at once that $G=0$.  By oddness, it follows then that $F=0$ almost everywhere on $J\times\R^2$.  Finally, since 
$F\in C(J;L^2(\R^2))$, it follows that $F(t)=0$ in $L^2(\R^2)$ for every
$t\in J$.

Choose now $t_0\in J$. By continuity into $L^2$, the equality
$|u(t)|=|v(t)|$ holds for every $t\in I_0$. If $u(t_0)=0$, then
$v(t_0)=0$, and we set $\zeta=1$.

Suppose instead that $u(t_0)\ne0$. The set on which $u(t_0,x)\ne0$
has positive measure. By Fubini's theorem, we may therefore choose $x_0$
in this set such that $|u(t_0,x_0)|=|v(t_0,x_0)|$ and
$F(t_0,x_0,y)=0$ for almost every $y$. Then
\[
 v(t_0,y)=\frac{v(t_0,x_0)}{u(t_0,x_0)}u(t_0,y)
 \quad\text{for almost every }y.
\]
Thus $v(t_0)=\zeta u(t_0)$ for a scalar $\zeta$, and equality of the
moduli at $x_0$ gives $|\zeta|=1$.

We now recall that if two mild solutions have the same value at
one endpoint of a sufficiently short interval $K$, their difference $w$
satisfies the homogeneous Duhamel equation.  The homogeneous and inhomogeneous
Strichartz estimates, together with the estimates used in
Lemma~\ref{lem:endpoint-wronskian}, give
\[
 \|w\|_{L^\infty_tL^2_x(K)\cap L^4_tL^\infty_x(K)}
 \le C\bigl(
 \|V_\infty\|_{L^1_tL^\infty_x(K)}
 +|K|^{1/4}\|V_2\|_{L^2_{t,x}(K)}\bigr)
 \|w\|_{L^\infty_tL^2_x(K)\cap L^4_tL^\infty_x(K)}.
\]
Hence, for $|K|$ small enough, $w=0$ on $K$.
Partitioning each compact subinterval of $I_0$ into finitely many such pieces
extends uniqueness forward and backward from $t_0$.  Since the Schr\"odinger equation with a real-valued potential is invariant under multiplication by a constant phase, $v$ and $\zeta u$ have the same
initial value at $t_0$ and therefore agree throughout $I_0$.
\end{proof}

\section{Cubic NLS: Proof of Corollary \ref{cor:cubic-NLS}}

Next, we prove Corollary~\ref{cor:cubic-NLS}, which is to establish phase retrieval in $L^2(\mathbb{R})$ for the nonlinear cubic Schr\"odinger equation 
\[
 i\partial_t w+\partial_x^2w=\sigma|w|^2w.
 \tag{2.1}\label{eq:cubic-NLS}
\]
\begin{proof}[Proof of Corollary~\ref{cor:cubic-NLS}]
Our main goal will be to verify that the equation, as well as the `potential' $V = \sigma|u|^2 = \sigma|v|^2$, satisfy the hypotheses of Theorem \ref{thm:time-potential}. This is entirely standard in the theory of nonlinear dispersive PDE (see \cite{Tao2006}), but we provide the short argument for convenience to the reader, whose main research focus may not be in PDE. Fix then a bounded interval $I$. On a time interval $K$, the homogeneous and
inhomogeneous Strichartz estimates~\cite{KeelTao} give
\[
 \left\|e^{i(t-t_0)\partial_x^2}w(t_0)
 -i\sigma\int_{t_0}^te^{i(t-s)\partial_x^2}|w(s)|^2w(s)\,ds
 \right\|_{L^\infty_tL^2_x\cap L^8_tL^4_x}
 \lesssim \|w(t_0)\|_2
 +|\sigma|\,\||w|^2w\|_{L^{8/7}_tL^{4/3}_x}.
\]
For the nonlinear term, H\"older's inequality in space and the finite-measure
embedding in time give
\begin{align*}
 \||w|^2w\|_{L^{8/7}_tL^{4/3}_x(K)}\le |K|^{1/2}
       \||w|^2w\|_{L^{8/3}_tL^{4/3}_x(K)}=|K|^{1/2}\|w\|_{L^8_tL^4_x(K)}^3.
 \tag{2.2}\label{eq:NLS-fixed-point-estimate}
\end{align*}
The pointwise inequality
\[
 \bigl||z_1|^2z_1-|z_2|^2z_2\bigr|
 \le C(|z_1|^2+|z_2|^2)|z_1-z_2|
\]
and the same H\"older estimate now yield
\begin{align*}
 &\||w_1|^2w_1-|w_2|^2w_2\|_{L^{8/7}_tL^{4/3}_x(K)}\le C|K|^{1/2}
 \bigl(\|w_1\|_{L^8_tL^4_x(K)}^2+\|w_2\|_{L^8_tL^4_x(K)}^2\bigr)
 \|w_1-w_2\|_{L^8_tL^4_x(K)}.
\end{align*}
Thus the Duhamel map is a contraction on a ball in
$C(K;L^2)\cap L^8(K;L^4)$ when $K$ is sufficiently short.  This is the standard local
$L^2$ construction for the solutions considered in the corollary.  For smooth solutions,
multiplication by $\overline w$, integration in $x$, and taking imaginary
parts give
\[
 \frac d{dt}\|w(t)\|_2^2=0.
\]
Approximation of the initial data and the $C_tL^2_x$ convergence in the fixed
point construction extend mass conservation to $L^2$ solutions.  Since the
length of the contraction interval depends only on $|\sigma|\|w(t_0)\|_2^2$,
the local solution can be iterated for all time.  In particular, covering the
bounded interval $I$ by finitely many local intervals gives
\[
 u,v\in L^8(I;L^4(\R)).
 \tag{2.3}\label{eq:NLS-L8L4}
\]
The inhomogeneous term belongs to $L^{8/7}_tL^{4/3}_x$ by
\eqref{eq:NLS-fixed-point-estimate}.  The pair $(8,4)$ is admissible, so its dual
pair is $(8/7,4/3)$.  The inhomogeneous Strichartz estimate from this dual
pair to the admissible endpoint $(4,\infty)$ gives
\[
 u,v\in L^4(I;L^\infty(\R)).
 \tag{2.4}\label{eq:NLS-endpoint}
\]
Finally, \eqref{eq:NLS-L8L4} and the finite length of $I$ imply
\[
 \|u\|_{L^4(I\times\R)}
 \le |I|^{1/8}\|u\|_{L^8_tL^4_x},
\]
and the same estimate holds for $v$. On $I_0\times\R$, set $\rho=|u|^2=|v|^2$ and
$V=\sigma\rho$.  For every bounded $J$ with $\overline{J}\subset I_0$, the preceding estimate
gives
\[
 \|V\|_{L^2(J\times\R)}
 =|\sigma|\|u\|_{L^4(J\times\R)}^2<\infty.
\]
Because $\sigma$ and $\rho$ are real, $V$ is real.  On $I_0$ the two
nonlinear equations are therefore the same linear equation:
\[
 i\partial_tu+\partial_x^2u=Vu,
 \qquad
 i\partial_tv+\partial_x^2v=Vv.
\]
Theorem~\ref{thm:time-potential} gives $v=\zeta u$ on $I_0$ for some
$\zeta\in\T$.  Choose $t_0\in I_0$.  The fixed-point difference estimate
above gives uniqueness for the cubic equation in
$C_tL^2_x\cap L^8_tL^4_x$.  Since the two nonlinear solutions agree up to
$\zeta$ at $t_0$, phase-rotation invariance of NLS and uniqueness give
$v=\zeta u$ for every time.
\end{proof}

\section{Failure in dimensions at least two: Proof of Theorem \ref{thm:higher-dimensional-failure}}

As a final contribution, we build a simple family of counterexamples showing Theorem~\ref{thm:higher-dimensional-failure}. This provides a \emph{negative} answer to the higher-dimensional assertion in \cite{Jaming2025}. 

\begin{proof}[Proof of Theorem~\ref{thm:higher-dimensional-failure}]
We first prove the assertion for $d=2$.  Let
\[
 \phi(x)=e^{-x^2/2},
 \qquad \psi(x)=-\phi'(x)=xe^{-x^2/2},
\]
 define
\begin{align*}
 a(x_1,x_2)=\phi(x_1)\psi(x_2),\hspace{5mm}b(x_1,x_2)=\psi(x_1)\phi(x_2),
\end{align*}
and also define $f=a+ib$ and $g=a-ib = \overline{f}$. Since
\[
 f(x_1,x_2)=e^{-(x_1^2+x_2^2)/2}(x_2+ix_1),
 \qquad
 g(x_1,x_2)=e^{-(x_1^2+x_2^2)/2}(x_2-ix_1),
\]
both functions are Schwartz.  On the other hand, they are clearly linearly independent. The free propagator preserves tensor products.  Indeed, the Fourier transform
of $p(x_1)q(x_2)$ is $\widehat p(\xi_1)\widehat q(\xi_2)$, while
\[
 e^{-4\pi^2it(\xi_1^2+\xi_2^2)}=e^{-4\pi^2it\xi_1^2}e^{-4\pi^2it\xi_2^2}.
\]
Therefore
\[
 e^{it\Delta_2}(p\otimes q)
 =(e^{it\partial_x^2}p)\otimes(e^{it\partial_x^2}q).
\]
Define
\[
 A_t(x)=(1+2it)^{-1/2}
 \exp\!\left(-\frac{x^2}{2(1+2it)}\right),
\]
where the square root is chosen continuously from $t=0$.  Direct
differentiation gives
\[
 i\partial_tA_t+\partial_x^2A_t=0,
 \qquad A_0=\phi.
\]
Uniqueness of the free evolution yields $e^{it\partial_x^2}\phi=A_t$.
Since $\psi=-\phi'$ and spatial differentiation commutes with the free
propagator, we have
\[
 e^{it\partial_x^2}\psi
 =-\partial_xA_t=\frac{x}{1+2it}A_t.
\]
Consequently
\begin{align*}
 e^{it\Delta}f(x_1,x_2)
 &=A_t(x_1)A_t(x_2)\frac{x_2+ix_1}{1+2it},\\
 e^{it\Delta}g(x_1,x_2)
 &=A_t(x_1)A_t(x_2)\frac{x_2-ix_1}{1+2it}.
\end{align*}
Since $x_1,x_2$ are real,
\[
 |x_2+ix_1|^2=x_1^2+x_2^2=|x_2-ix_1|^2.
\]
All other factors in the two formulas are identical.  Their moduli therefore
coincide at every point and every time.

For $d>2$, we can tensor both functions with
$\Phi(x_3,\ldots,x_d)=\exp(-\frac12\sum_{j=3}^dx_j^2)$.  The free propagator
again factors, so both the modulus identity and linear independence are preserved. This finishes the proof.
\end{proof}

\section{Comments on alternative proof methods}
\label{sec:stationary-comments}

We also provide a companion note that contains three alternative proofs for time-independent potentials. They are recorded there because their hypotheses and methods differ from the time-dependent argument of Theorem~\ref{thm:time-potential}. However, we give the interested reader a quick panorama of the techniques used in those cases below. 

\subsection{Masuda's unique-continuation theorem}

Assume that $V$ is time independent and that the two solutions belong to
$C(I;H^1(\R))$.  For
$\rho_w=|w|^2$ and
$j_w=\operatorname{Im}(\overline w\,\partial_xw)$, the equation with real
potential gives
\[
 \partial_t\rho_w+2\partial_x j_w=0.
\]
Equality of the densities therefore implies
$\partial_x(j_u-j_v)=0$.  The $H^1$ assumption gives
$\|j_w(t)\|_1\le \|w(t)\|_2\|\partial_xw(t)\|_2$, so the constant in $x$
must be zero.  Since
$2\operatorname{Re}(\overline w\,\partial_xw)=\partial_x|w|^2$, the real
parts agree as well, and hence
\[
 \overline u\,\partial_xu=\overline v\,\partial_xv
 \quad\text{almost everywhere on }I\times\R.
\]
The $H^1$ regularity also provides jointly continuous representatives.  On
the open set $\Omega=\{(t,x):u(t,x)\ne0\}$, define $q=v/u$.  Then $|q|=1$
and
\[
 0=\overline v\,\partial_xv-\overline u\,\partial_xu
   =|u|^2\overline q\,\partial_xq.
\]
Thus $q$ is constant in $x$ on each sufficiently small rectangle contained
in $\Omega$.  No division at a zero of $u$ is used.

The remaining point is to remove a possible dependence on time.  On such a
rectangle choose a nonzero, nonnegative $\chi\in C_c^\infty(\R)$ supported
in its spatial side, and write
\[
 A(t)=\int_\R\chi\overline u v\,dx,
 \qquad B(t)=\int_\R\chi|u|^2\,dx.
\]
If $v=c(t)u$ on the rectangle, then $A=cB$ and $B>0$.  Testing the two
equations and integrating once in $x$ gives $A'=cB'$ almost everywhere.
Consequently $c'B=0$, so $c$ is constant.  The quotient is therefore
constant on each connected component of $\Omega$.  For one such component,
$w=v-\zeta u$ is a solution which vanishes on a nonempty open subset of
spacetime.  Masuda's theorem~\cite{Masuda} yields $w\equiv0$.

This is the shortest of the three stationary proofs.  It was not used for the
main theorem because Masuda's result requires a time-independent semibounded
Hamiltonian, while the common potential arising in a nonlinear equation is
generally time dependent.  The phase reduction also uses $H^1$ regularity on
the observation interval.  Theorem~\ref{thm:time-potential}, by contrast,
allows the stated time-dependent potential class and arbitrary $L^2$ data.

\subsection{Faddeev scattering}

We write
\[
 L^1_1(\R)=\left\{V\in L^1(\R):\int_\R(1+|x|)|V(x)|\,dx<\infty\right\}.
\]
For a real potential $V\in L^1_1(\R)$, one-dimensional scattering
theory~\cite{DeiftTrubowitz,EgorovaKopylovaMarchenkoTeschl} shows that
$H=-\partial_x^2+V$ has finitely many simple negative eigenvalues
$-\kappa_j^2$, while its absolutely continuous spectrum on $(0,\infty)$
has multiplicity two.  The corresponding spectral expansion has the
form
\[
 e^{-itH}f
 =\sum_j c_j e^{it\kappa_j^2}\phi_j
  +\sum_{\alpha=1}^2\int_0^\infty
    e^{-itk^2}g_\alpha(k)e_\alpha(\,\cdot\,,k)\,dk.
\]
Here, $\phi_j$ is an $L^2$ eigenfunction, and, for each $k>0$, the functions
$e_1(\,\cdot\,,k)$ and $e_2(\,\cdot\,,k)$ form a basis of the
two-dimensional spectral fiber at energy $k^2$.  Thus $\alpha\in\{1,2\}$
indexes the two components of the absolutely continuous spectral
representation.  The expansion remains valid if zero energy is resonant,
meaning that $H\phi=0$ has a nonzero bounded solution which is not in $L^2$.
The coefficients $c_j$ and the two functions $g_\alpha$ give a unitary
spectral representation of arbitrary $L^2$ initial data.  We denote this
unitary spectral transform by $\mathcal F$.

The temporal Fourier transform of the density identity separates products by
their energy difference $\delta=E-F$.  Along a fixed nonzero gap, we use the
energy sum $s=E+F$.  If $y$ and $z$ solve the stationary equations at energies
$E$ and $F$, their products satisfy
\[
 A'=B,\qquad B'=(2V-s)A+2D,\qquad
 C'=-\delta A,\qquad
 D'=\left(V-\frac{s}{2}\right)B+\frac{\delta}{2}C,
\]
where $A=yz$, $B=y'z+yz'$, $C=y'z-yz'$, and $D=y'z'$.  The system contains
no derivative of $V$.  Moreover, when $E\ne F$, the four products obtained
from bases of the two two-dimensional solution spaces are linearly
independent: if their scalar component $A$ vanishes, the system successively
forces $B$, $D$, and $C$ to vanish.

To separate the energy sum, one divides a fixed-gap relation by $s-\lambda$
and integrates in $s$.  The resulting vector solves the same product system
with sum $\lambda$.  For sufficiently negative $\lambda$, both auxiliary
energies $(\lambda\pm\delta)/2$ are negative and the lower one lies below
$\inf\sigma(H)$.  Scattering asymptotics and the Riemann--Lebesgue lemma show
that its scalar component decays at both spatial infinities.  The behavior of
solutions to the negative-energy ODE then forces the whole vector to vanish:
a nonzero solution decaying at both ends at the lower energy would be an
$L^2$ eigenfunction below the spectrum.  Analyticity in $\lambda$ and
Stieltjes inversion show that the original fixed-gap measure is zero.
Applying this for almost every $\delta\ne0$, and treating separately the
finite sum of products of two eigenfunctions, recovers
\[
 (\mathcal F u_0)\otimes\overline{\mathcal F u_0}
 =(\mathcal F v_0)\otimes\overline{\mathcal F v_0}.
\]
Equality of these rank-one operators gives $v_0=\zeta u_0$.

This proof allows arbitrary $L^2$ data, negative eigenvalues, and a
zero-energy resonance.  It was left in the companion note because it requires
a time-independent potential in $L^1_1(\R)$, observations for all
$t\in\R$, and the complete spectral resolution of $H$.  As none of these
spectral reductions is available for the time-dependent potentials arising
in the nonlinear application, the argument is hence more specialized
than the physical-space proof of Theorem~\ref{thm:time-potential}.

\subsection{Observation on one exterior half-line}

Suppose that $V$ is smooth, compactly supported, and vanishes on
$(R,\infty)$.  Each negative-energy eigenfunction is then a nonzero multiple
of $e^{-\kappa_j(x-R)}$ on this half-line.  The two components of the
absolutely continuous spectral representation can also be combined into one
Fourier integral by assigning them to positive and negative $k$, respectively,
so the restriction
of an evolution has the form
\[
 e^{-itH}f(x)
 =\sum_j a_j e^{-\kappa_j(x-R)}e^{it\kappa_j^2}
  +\int_\R F(k)e^{ik(x-R)-ik^2t}\,dk.
\]
Thus the half-line problem first reduces to phase retrieval for a free
Fourier extension coupled to finitely many decaying exponentials.

Test the density difference in time against the inverse Fourier transform of
$\chi\in C_c^\infty(\R)$, and take the Laplace transform in $y=x-R>0$.
If
\[
 q(k,\ell)=F(k)\overline{F(\ell)}
           -G(k)\overline{G(\ell)},
\]
then the free--free term becomes
\[
 \Phi_\chi(z)=\int_\R\frac{M_\chi(\xi)}{z-i\xi}\,d\xi,
 \qquad
 M_\chi(\xi)=\int_\R q(k,k-\xi)
                  \chi(\xi^2-2k\xi)\,dk,
 \qquad \operatorname{Re}z>0.
\]
The compact support of $\chi$ and a Schur estimate make $M_\chi$ integrable,
so this is an ordinary Cauchy transform.

Every term containing a negative-energy eigenfunction is holomorphic in a
strip across the imaginary axis.  The transformed density identity therefore continues
$\Phi_\chi$ through that axis.  The Cauchy jump formula gives
$M_\chi=0$ for every $\chi$.  The change of variables
$\xi=k-\ell$, $\omega=\ell^2-k^2$ then yields $q=0$ away from the diagonal,
which is a null set.  Hence $F\otimes\overline F=G\otimes\overline G$ and
$G=\zeta F$.  The mixed terms recover the negative-eigenvalue coefficients by
successively crossing the lines $\operatorname{Re}z=-\kappa_j$; when the
absolutely continuous part vanishes, two elementary Vandermonde arguments
recover the finite rank-one matrix of negative-eigenvalue coefficients.
Finally, the transmission coefficient has no positive-energy
zeros~\cite{DeiftTrubowitz}, so the relation on the free half-line recovers
both components of the original absolutely continuous spectral data.

This theorem uses less spatial data than Theorem~\ref{thm:time-potential},
but it assumes a time-independent, smooth, compactly supported potential and
observations for every time.  Its proof depends on an exactly free exterior
region, on analytic continuation of the associated Laplace transforms, and
on one-dimensional scattering coefficients.  These ingredients do not
persist for the time-dependent potentials in the nonlinear problem, the reason for which we decided to leave it out.

\end{document}